\newtheorem{theorem}{Theorem}[section]
\newtheorem{proposition}[theorem]{Proposition}
\newtheorem{corollary}[theorem]{Corollary}
\newtheorem{lemma}[theorem]{Lemma}
\newtheorem{definition}[theorem]{Definition}
\newtheorem{example}[theorem]{Example}
\newtheorem{remark}[theorem]{Remark}
\begin{document}

\title{On $3$-folds having a holomorphic torus action with $6$ fixed points}

\author{Nicholas Lindsay}

\maketitle

\begin{abstract}
We study $3$-folds with an action of a algebraic torus $T$ and finite fixed point set. In particular, assuming the torus action has (exactly) $6$ fixed points we show that aside from Mori fibre spaces, the topology of such spaces is strongly restricted. For $T= \mathbb{C}^{*}$ we prove that there are two explicit infinite families plus a finite number of exceptional cases. For $T = \mathbb{C}^{*} \times \mathbb{C}^{*}$ there are $2$ exceptional cases which are described explicitly.
\end{abstract}
\tableofcontents

\section{Introduction}

Throughout, we work over $\mathbb{C}$, all varieties are assumed to be projective. 

We study the topology of smooth $3$-folds with an algebraic torus action using Mori's minimal model program. Since this is a rather large class of $3$-folds, one viable approach to this problem is to impose an upper bound on the number of fixed points. The number of fixed points is known to be even by \cite[Theorem 2]{CS}.

 The smallest possible number of fixed points of an action of $\mathbb{C}^*$ on a $3$-fold is $4$, and such $3$-folds are completely classified  into $4$ topological types \cite{KPS,CPS,To2}. These correspond to exactly the smooth Fano $3$-folds having the same Betti numbers as $\mathbb{CP}^3$. After this, the least possible number of fixed points is $6$, which is the subject of this paper. We will be primarily interested in understanding $3$-folds up to the following topological notion of equivalence: \begin{definition}
Two almost complex $6$-manifolds $M_{1},M_{2}$ are called AC-equivalent if there is a diffeomorphism $\phi : M_{1} \rightarrow M_{2}$ such that $\phi^{*}(c_{1}(TM_{2})) =c_{1}(TM_{1})$.
\end{definition} 

This notion of equivalence prescribes the homotopy class of the almost complex structure \cite[Proposition 8]{OV}.  First, we prove the following:

\begin{theorem} \label{newmain}
Let $X$ be a smooth $3$-fold with a holomorphic $\mathbb{C}^{*}$-action with  $6$ fixed points. Then either: \begin{enumerate} \item $X$ is a Mori fibre space.
 \item $X $ is AC-equivalent to $ Bl_{C_{n}}(\mathbb{CP}^{3}) $ where  $C_{n} \subset \mathbb{CP}^{3}$ is a smooth rational curve of degree $n$. 
\item $X $ is AC-equivalent to  $Bl_{C'_{n}}(Q)$ or where $Q$ is the quadric $3$-fold, and $C'_{n} \subset Q$ is a smooth rational curve of degree $n$. 
\item X is AC-equivalent to one of a finite number of exceptional cases. \end{enumerate}
\end{theorem}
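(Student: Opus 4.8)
The plan is to combine the Bia\l{}ynicki--Birula decomposition with Mori's classification of extremal contractions of smooth $3$-folds. Since the $\mathbb{C}^{*}$-action has isolated fixed points, the Bia\l{}ynicki--Birula decomposition exhibits $X$ as a disjoint union of affine cells indexed by its $6$ fixed points, so $H^{*}(X;\mathbb{Z})$ is torsion-free, concentrated in even degrees, with total Betti number $6$. Together with $b_{0}=b_{6}=1$ and Poincar\'e duality this forces $b_{2}(X)=b_{4}(X)=2$ and $b_{1}=b_{3}=b_{5}=0$; in particular every class in $H^{2}(X)$ is algebraic, so $\rho(X)=2$. Moreover, through a general point of $X$ the closure of the (one-dimensional) $\mathbb{C}^{*}$-orbit is a rational curve, so $X$ is uniruled; hence $K_{X}$ is not pseudoeffective, and in particular not nef.

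Next I would run the minimal model program. Because $\mathrm{Aut}^{0}(X)$ is connected it acts trivially on $N_{1}(X)$, hence fixes every extremal ray of $\overline{NE}(X)$, so all extremal contractions of $X$ are $\mathbb{C}^{*}$-equivariant. If $X$ admits a fibre-type extremal contraction, then $X$ is a Mori fibre space --- case $(1)$. Otherwise, since $K_{X}$ is not nef there is a $K_{X}$-negative extremal contraction $f\colon X\to Y$, which must be divisorial because a smooth $3$-fold has no small extremal contractions; and since $\rho(X)=2$ we get $\rho(Y)=1$. The $3$-fold $Y$ inherits a nontrivial $\mathbb{C}^{*}$-action, hence is uniruled with Picard number $1$, i.e.\ a Fano $3$-fold with terminal singularities, and the exceptional divisor $E$ together with its image is $\mathbb{C}^{*}$-invariant. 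By Mori's theorem $f$ is one of the five types $E1$--$E5$.

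I would then dispose of these cases in turn. In types $E1$ and $E2$ the target $Y$ is smooth and $f$ is the blow-up of a smooth curve, resp.\ point; finiteness of $X^{\mathbb{C}^{*}}$ forces the centre to be a point or a $\mathbb{C}^{*}$-invariant smooth rational curve $\cong\mathbb{P}^{1}$, and counting the fixed points on $E$ (three on $\mathbb{P}^{2}$, four on the relevant $\mathbb{P}^{1}$-bundle over $\mathbb{P}^{1}$) shows $Y$ has exactly four fixed points. By the classification of $\mathbb{C}^{*}$-$3$-folds with $4$ fixed points \cite{KPS,CPS,To2} (together with the classification of Fano $3$-folds) $Y$ is one of $\mathbb{CP}^{3}$, $Q$, $V_{5}$, $V_{22}$. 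Blowing up an invariant smooth rational curve in $\mathbb{CP}^{3}$ gives case $(2)$ and in $Q$ gives case $(3)$; blowing up a point of $\mathbb{CP}^{3}$ or $Q$ yields two exceptional manifolds; and for $Y=V_{5}$ or $V_{22}$ the automorphism group is $\mathrm{PGL}_{2}$, so the $\mathbb{C}^{*}$-action is unique up to conjugacy and its invariant rational curves have bounded degree, giving only finitely many $X$. In types $E3$, $E4$, $E5$ the divisor $E$ is contracted to a point, so $X=Bl_{p}(Y)$ where $p$ is a terminal singularity (an ordinary double point, a $\tfrac12(1,1,1)$-point, etc.) of the Fano $3$-fold $Y$ of Picard number $1$; by boundedness of such Fano $3$-folds the pairs $(Y,p)$ admitting a suitable $\mathbb{C}^{*}$-action lie in finitely many families, so only finitely many AC-equivalence types of $X$ occur. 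All the finitely many exceptions collected above constitute case $(4)$.

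I expect the main obstacle to be the detailed analysis inside the divisorial case: verifying the fixed-point counts on $Y$ for each of $E1$--$E5$, and above all pinning down the finite list of singular terminal Fano targets arising in $E3$--$E5$ together with their invariant centres, and the admissible invariant rational curves in $E1$--$E2$ for $V_{5}$ and $V_{22}$. A further subtlety is that the $4$-fixed-point classification determines $Y$ only up to AC-equivalence, so one must check that the diffeomorphism type of the blow-up $Bl_{C}(Y)$, and its first Chern class, depend only on $\deg C$, so that cases $(2)$ and $(3)$ really are single infinite families indexed by $n$; this rests on the classification of simply connected six-manifolds.
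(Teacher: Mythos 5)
Your proposal is correct and follows essentially the same route as the paper: the Bia\l{}ynicki--Birula decomposition for the Betti numbers, then Mori's classification of extremal contractions, with the smooth targets identified as $\mathbb{CP}^{3}$, $Q$, $V_{5}$, $V_{22}$ and the singular targets handled by boundedness of terminal Fano $3$-folds of bounded Gorenstein index. The only local differences are that the paper derives equivariance of the contraction from Blanchard's theorem rather than from $\mathrm{Aut}^{0}$ acting trivially on $N_{1}$, rules out the ordinary-double-point contraction outright (the four fixed points on the contracted quadric would leave only three on $Y$, contradicting Borel's lower bound of $\dim Y+1$ fixed points) instead of absorbing it into the bounded case, and bounds the invariant curves in $V_{5}$, $V_{22}$ via Tolman's theorem and ABBV localization rather than via $\mathrm{Aut}=\mathrm{PGL}_{2}$ (which is not quite accurate for every relevant member of the $V_{22}$ family, though the bounded-degree conclusion you need still holds) --- none of which affects the argument.
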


The assumption that the action has $6$-fixed points implies that $b_{2}(X)=2$, see Lemma \ref{betti}. We note that the $3$-folds in cases 2. and 3. have a $\mathbb{C}^{*}$-action with $6$ fixed points, in Example \ref{cptc} and Example \ref{quadc} respectively. Mori fibre spaces satisfying the hypotheses of Theorem \ref{newmain} are very reasonable, in particular the base and general fibres are projective spaces, see Section \ref{mfa}. However, we do not classify the exceptional cases completely.

To illustrate the restrictive nature of Theorem \ref{newmain} we formulate the following corollary. We phrase our result in terms of the $\Delta$-invariant, $\Delta(X) \in \mathbb{Z}$ \footnote{See Definition \ref{delta}. $\Delta$ is referred to as the discriminant in \cite{OV}, however this clashes with the unrelated notion of discriminant curve of a conic bundle, hence we just refer to $\Delta$ as the "$\Delta$-invariant" in this paper.}, which is a fundamental invariant of complex $3$-folds with $b_{2}=2$ \cite[Section 5.2]{OV}. The $\Delta$-invariant was also used to study Chern numbers of smooth $3$-folds in \cite{CT}. We prove the following:

\begin{corollary} \label{dis}
There is a constant $K$ such that any smooth $3$-fold $X$ having a holomorphic $\mathbb{C}^{*}$-action with $6$ fixed points, satisfies one of the following two conditions:
\begin{enumerate}
\item $X $  is a conic bundle over $\mathbb{CP}^{2}$, with discriminant curve having degree at most $3$. 
\item $\Delta(X) <K$. 

\end{enumerate}
\end{corollary}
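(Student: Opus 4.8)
The plan is to run through the four alternatives of Theorem~\ref{newmain} and show that each produces a threefold belonging to class~1 or class~2 of the corollary; the constant $K$ is then chosen larger than the finitely many bounds encountered. The key preliminary remark is that $\Delta$ is an invariant of the AC-equivalence class. Indeed, by Definition~\ref{delta} it is built from the cubic form $x\mapsto\langle x^{3},[X]\rangle$ on $H^{2}(X;\mathbb{Z})$ together with $c_{1}(X)$ and $c_{2}(X)$; the cubic form is an oriented diffeomorphism invariant, $c_{1}(X)$ is preserved by the very definition of AC-equivalence, and $c_{2}(X)=\tfrac12(c_{1}(X)^{2}-p_{1}(X))$ is then determined because the Pontryagin class $p_{1}(X)$ is a diffeomorphism invariant. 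So for $X$ in cases 2--4 of Theorem~\ref{newmain} it suffices to bound $\Delta$ for the given explicit model.

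Case~4 is immediate: there are finitely many AC-classes, hence finitely many values of $\Delta$. For cases~2 and~3 I would compute $\Delta(Bl_{C_{n}}(\mathbb{CP}^{3}))$ and $\Delta(Bl_{C'_{n}}(Q))$ using the standard blow-up formulas for the cohomology ring and the Chern classes of the blow-up of a smooth curve in a smooth threefold. The constraints $\chi=6$ and $\chi(\mathcal{O})=1$ fix every Chern number except $c_{1}^{3}$, which varies linearly in $n$; one then checks that the formula for $\Delta$ is insensitive to this variation and stays bounded, so these cases contribute to class~2.

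For case~1, let $f:X\to Y$ be the Mori fibre structure. By Lemma~\ref{betti} we have $b_{2}(X)=2$, so $b_{2}(Y)=1$; thus $Y$ is $\mathbb{CP}^{1}$ or $\mathbb{CP}^{2}$ (not a point, since otherwise the Picard rank of $X$ would be $1$), and by Section~\ref{mfa} the general fibre is a projective space. If $Y=\mathbb{CP}^{1}$, the general fibre is $\mathbb{CP}^{2}$, and because $X$ is smooth with relative Picard rank $1$ — using $\chi(X)=6$ to exclude degenerate fibres — $f$ is a $\mathbb{CP}^{2}$-bundle, so $X\cong\mathbb{P}(\mathcal{O}\oplus\mathcal{O}(a)\oplus\mathcal{O}(b))$ over $\mathbb{CP}^{1}$; in suitable coordinates on $H^{2}(X)$ the cubic form is then $(a+b)x^{3}+3x^{2}y$, whose discriminant vanishes, while $c_{1}(X)$ and $c_{2}(X)$ depend only mildly on $(a,b)$, so $\Delta(X)$ is bounded and class~2 holds. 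If instead $Y=\mathbb{CP}^{2}$, then $f$ is a conic bundle; letting $D\subset\mathbb{CP}^{2}$ be its discriminant curve, it remains to show $\deg D\le 3$, which places $X$ in class~1.

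This last step is where I expect the real work to lie. The $\mathbb{C}^{*}$-action descends along $f$, and the fixed locus of the induced action on $\mathbb{CP}^{2}$ must consist of exactly three points $p_{1},p_{2},p_{3}$: were it a line together with a point, the fibres of $f$ over that line would sweep out a positive-dimensional fixed subset of $X$, contradicting finiteness of $X^{\mathbb{C}^{*}}$. The curve $D$ is $\mathbb{C}^{*}$-invariant, hence a union of irreducible $\mathbb{C}^{*}$-invariant curves, each joining two of the $p_{i}$; for a generic choice of $\mathbb{C}^{*}$ inside the two-torus normalising $D$ these can only be the three invariant lines, which already gives $\deg D\le 3$. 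To handle the special actions admitting invariant conics (or invariant curves of higher degree) I would combine two inputs: (i) $D$ is reduced with at worst nodes, as $f$ is a standard conic bundle with $X$ and $\mathbb{CP}^{2}$ smooth; and (ii) the localisation identity $6=\chi(X)=\sum_{i}\chi(X_{p_{i}})$ together with the elementary fact that a (possibly reducible or non-reduced) conic carrying a $\mathbb{C}^{*}$-action with finite fixed locus has Euler characteristic $2$ or $3$. An invariant smooth conic in $\mathbb{CP}^{2}$ is automatically tangent, at each of the two fixed points it contains, to one of the three invariant lines, so adjoining that line to $D$ would create a tacnode and violate~(i); hence $D$ can contain at most an invariant conic together with the single invariant line meeting it transversally, and the Euler-characteristic bookkeeping excludes anything larger (the case of higher-degree invariant curves being handled in the same spirit). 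Thus $\deg D\le 3$ in every case. Carrying out this combinatorial analysis of the invariant discriminant curve — and, secondarily, verifying that $\Delta$ does not feel the variation of $c_{1}^{3}$ along the blow-up families — is the main obstacle; the remainder of the argument is routine.
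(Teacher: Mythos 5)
Your overall architecture matches the paper's: run through the alternatives of Theorem~\ref{newmain}, absorb the finitely many exceptional cases into $K$, split the Mori fibre space case by $\dim Y$, and reduce the conic bundle case to a classification of $\mathbb{C}^{*}$-invariant nodal plane curves (which is exactly the paper's Lemma~\ref{plane}: up to coordinates the only invariant irreducible curves are the coordinate lines and the conics $[X^{b}:X^{b-a}Y^{a}:Y^{b}]$ with $a=1$, $b=2$, higher-degree ones being non-nodal, and invariant conics meeting each other and two of the three lines tangentially at the fixed points --- your tacnode observation). Two of your steps are, however, off the mark. First, and most seriously, your treatment of cases 2 and 3 rests on the claim that ``the formula for $\Delta$ is insensitive to this variation and stays bounded.'' This is false: by the paper's Lemma~\ref{blowupsequence}, $\Delta\bigl(Bl_{C_{n}}(X)\bigr)=d^{2}\bigl((2-rnd)^{2}-4n^{3}d^{2}\bigr)$, so for $\mathbb{CP}^{3}$ one gets $(2-4n)^{2}-4n^{3}$, which depends on $n$ and tends to $-\infty$. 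The families are bounded \emph{above} only because the leading term $-4n^{3}d^{4}$ has a negative sign; had that sign been positive the corollary would be false. This sign computation is the entire content of cases 2--3 and cannot be replaced by an appeal to $\chi=6$ and $\chi(\mathcal{O})=1$ fixing Chern numbers --- indeed $\Delta$ is, by Definition~\ref{delta}, a function of the cubic intersection form alone and involves no Chern classes, so your preliminary discussion of $c_{1}$, $c_{2}$ and $p_{1}$ is a red herring (plain diffeomorphism invariance of the cubic form already suffices, as the paper notes).

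Second, your $\dim Y=1$ case is more laborious than necessary and leans on an unproved assertion (that $\chi(X)=6$ forces $f$ to be a $\mathbb{CP}^{2}$-bundle, hence a projectivized split bundle). The paper's Lemma~\ref{square} bypasses all of this: the fibre class $F\in H^{2}(X,\mathbb{Z})$ satisfies $F^{2}=0$, and by \cite[Proposition 5]{OV} the existence of a nonzero square-zero class is equivalent to $\Delta(X)=0$. Your explicit cubic-form computation does give $\Delta=0$ in the split-bundle case, so the conclusion is right, but you should either justify the bundle structure or use the square-zero argument, which needs no structure theory at all. With the sign of the $-4n^{3}d^{4}$ term actually computed and the $\dim Y=1$ case repaired, your proof closes.
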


For a rank $2$ vector bundle $E$ over $\mathbb{CP}^{2}$ then, $\Delta(\mathbb{P}(E)) = c_{1}(E)^{2} - 4c_{2}(E)$ \cite[Proposition 17]{OV}. In particular, Case 1. cannot be removed from Corollary \ref{dis} since the $3$-folds $X_{n }= \mathbb{P}(\mathcal{O} \oplus \mathcal{O}(n))$ have a $\mathbb{C}^{*}$-action with isolated fixed points and $\Delta (X_{n}) = n^{2}$. We also note that degree $3$ discriminant curves do occur, see Example \ref{ann}. We use Corollary \ref{dis} to prove a restriction on the cohomology ring of $X$ in Corollary \ref{distop}.

Besides from this Corollary, we are also able to obtain a more restrictive result in the case when the action extends to a rank $2$ action, and is not toric. It turns out in this case, we get a statement up to isomorphism of varieties. Let $Y_{0}$ be the terminal Fano $3$-fold described in 11. of the table of \cite[Theorem 1.1]{BHHN}, let $Y$ be the blow-up of $Y_{0}$ in its singular locus. Let $Q$ be a smooth quadric $3$-fold.  We prove the following

\begin{theorem} \label{twotorus}

Let $X$ be a smooth $3$-fold with a holomorphic $\mathbb{C}^{*} \times \mathbb{C}^{*}$-action with  $6$ fixed points. Assume $X$ is not toric. Then either $X$ is a Mori fibre space or $X$ is isomorphic to either $Y$ or $Bl_{p}(Q)$.
\end{theorem}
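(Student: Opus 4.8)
The overall strategy is to run the $\mathbb{C}^{*} \times \mathbb{C}^{*}$-equivariant minimal model program, using Theorem~\ref{newmain} as the starting point (restricting the rank $2$ action to a generic $\mathbb{C}^{*}$-subtorus gives a $\mathbb{C}^{*}$-action with $6$ fixed points), and then showing that in the non-Mori-fibre-space cases the extra torus symmetry rigidifies the AC-equivalence statements of Theorem~\ref{newmain} into actual isomorphisms. Concretely, I would first dispose of the Mori fibre space alternative, then assume $X$ is not a Mori fibre space and show that $X$ admits a divisorial contraction $\pi : X \to X'$ which is $T$-equivariant; since $b_2(X) = 2$ (Lemma~\ref{betti}), $X'$ has Picard rank $1$ and is again a $3$-fold with a torus action. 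The exceptional divisor $E$ of $\pi$ is $T$-invariant, and one analyzes the possible types of divisorial contractions on a smooth $3$-fold (the classical list: blow-up of a smooth point, blow-up of a smooth curve, and the three "special" contractions to a point in Mori's classification of extremal rays on smooth $3$-folds).

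**Main steps.**

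First I would establish that $X' $ is a smooth Fano $3$-fold of Picard rank $1$ with a rank $2$ torus action, or else a mildly singular (terminal Gorenstein) Fano with such an action — this is where the input from \cite{BHHN} enters, since that paper classifies terminal Fano $3$-folds of Picard rank $1$ with a $2$-torus action. The key point is that the fixed-point count is constrained: blowing up a $T$-fixed point adds $2$ fixed points in a threefold (replacing $1$ fixed point by the $3$ torus-fixed points of the exceptional $\mathbb{CP}^2$, a net gain of $2$), and blowing up a $T$-invariant curve $C$ changes the fixed-point count by (something computable from the genus and the number of fixed points on $C$). Since $X$ has exactly $6$ fixed points, $X'$ has exactly $4$ fixed points, so by \cite{KPS,CPS,To2} and the Fano case analysis, $X'$ is (AC-equivalent, hence by rank-$2$-rigidity actually isomorphic to) $\mathbb{CP}^3$, the quadric $Q$, or one of finitely many others — but the $2$-torus hypothesis plus the $\Delta$-invariant bookkeeping from Corollary~\ref{dis} cuts this down. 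Then I would argue that the center of the blow-up is itself forced: blowing up $\mathbb{CP}^3$ along a $T$-invariant smooth rational curve of degree $n$ with $6$ fixed points on the total space forces, via the fixed-point balance, the curve to be a line ($n=1$) — a $T$-invariant line in $\mathbb{CP}^3$, and all such are projectively equivalent, giving $X \cong Bl_{p}(Q)$ after identifying $Bl_{\text{line}}(\mathbb{CP}^3)$ appropriately, or handle the quadric case to land on $Bl_p(Q)$ directly and the special contraction case to land on $Y$. Throughout, the replacement of "AC-equivalent" by "isomorphic" uses that a smooth Fano $3$-fold of Picard rank $1$ with a faithful $2$-torus action is determined by its deformation class together with the combinatorial data of the action, and the relevant moduli are rigid.

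**The main obstacle.**

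The hardest part will be the special (non-blow-up) divisorial contractions — the three exceptional types $E_3, E_4, E_5$ in Mori's list where $\pi$ contracts a divisor to a point and $X'$ is singular. Here one cannot simply cite the smooth Fano classification; instead one must show that the only terminal Gorenstein Fano $3$-fold of Picard rank $1$ arising with the correct $2$-torus action and the correct resolution behavior is $Y_0$ from \cite[Theorem 1.1]{BHHN}, and that $X$ is then its torus-equivariant resolution $Y = Bl_{\operatorname{Sing}}(Y_0)$. This requires carefully matching the local analytic type of the singularity of $X'$ (an ordinary double point or similar terminal Gorenstein singularity) with the entries of the \cite{BHHN} table, checking that the exceptional divisor of the resolution is $T$-invariant and accounts correctly for the fixed-point count ($X'$ singular means some "fixed points" of $X$ collapse in a controlled way), and verifying there is no competing Mori fibre space structure on $X$ that would push it into the first alternative. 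A secondary technical point is ruling out, via the $\Delta$-invariant and the $b_2 = 2$ topology, that a higher-degree curve blow-up could occur — i.e., confirming that the infinite families of Theorem~\ref{newmain}, cases 2 and 3, genuinely do not support a rank $2$ action except in their lowest ($n=1$) member; this should follow from the fact that a generic smooth rational curve of degree $n \geq 2$ in $\mathbb{CP}^3$ or $Q$ has at most a $1$-dimensional stabilizer in the automorphism group, so cannot be invariant under a $2$-torus acting with isolated fixed points.
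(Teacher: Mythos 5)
Your skeleton matches the paper's: restrict to a generic $\mathbb{C}^{*}$-subtorus to import Lemma \ref{conres} and the contraction analysis of Theorem \ref{newmain}, identify the smooth targets via \cite[Theorem 1.1.2]{KPS}, and handle the singular targets via \cite[Theorem 1.1]{BHHN}. However, several of your load-bearing steps are wrong or missing. First, the ``fixed-point balance'' cannot force the blown-up curve to be a line: by Example \ref{cptc} and Example \ref{quadc} \emph{every} $Bl_{C_{n}}(\mathbb{CP}^{3})$ and $Bl_{C'_{n}}(Q)$ carries a $\mathbb{C}^{*}$-action with exactly $6$ fixed points, so the count is blind to $n$. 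The paper's mechanism is different: the blow-up curve must itself be invariant under the full $2$-torus (Theorem \ref{blan}), and Remarks \ref{twoone} and \ref{twotwo} assert this fails for $n\geq 2$; your fallback argument that a \emph{generic} degree-$n$ curve has at most a $1$-dimensional stabilizer proves nothing about the invariant curves that actually occur, and is in any case false for the plane conic ($n=2$), whose stabilizer in $\mathrm{PGL}_{4}$ contains a $2$-torus. Second, your identification ``$Bl_{\mathrm{line}}(\mathbb{CP}^{3})$, appropriately identified, gives $Bl_{p}(Q)$'' is false: $Bl_{L}(\mathbb{CP}^{3})$ is toric and a Mori fibre space, while $Bl_{p}(Q)$ arises in the paper from the \emph{point}-blow-up branch with target $Q$ (it is the blow-up of a plane conic in $\mathbb{CP}^{3}$, not of a line); the $n=1$ curve blow-ups are disposed of as Mori fibre spaces, and the non-toric hypothesis is what kills $Bl_{p}(\mathbb{CP}^{3})$.

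Third, in the branch you correctly flag as hardest, your sketch lacks the discriminating argument and mislabels the data. The paper shows the singular target $Y$ is $\mathbb{Q}$-factorial, terminal, of Picard rank $1$ with an effective $2$-torus action, hence lies in the table of \cite[Theorem 1.1]{BHHN}; the contraction to the Gorenstein (cDV) point is then excluded because the only Gorenstein entry of that table is the smooth quadric, and in the index-$2$ case the discrepancy relation $K_{X}=\phi^{*}K_{Y}+\tfrac12 E$ forces $K_{Y}^{3}\notin\mathbb{Z}$, which singles out entry 11, i.e.\ $Y_{0}$, whence $X\cong Y$. Your description of this case as matching ``terminal Gorenstein'' singularities is off ($Y_{0}$ has a non-Gorenstein $\tfrac12(1,1,1)$ point), and you give no criterion that selects entry 11. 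Finally, your proposed upgrade from AC-equivalence to isomorphism by ``rank-$2$ rigidity'' is asserted, not proved; the paper never needs such a principle, since \cite[Theorem 1.1.2]{KPS} and \cite[Theorem 1.1]{BHHN} classify the possible targets up to isomorphism, and the centers of the relevant blow-ups (a point of $Q$, the singular locus of $Y_{0}$) are determined up to equivariant automorphism. As written, the proposal would not compile into a proof without supplying these arguments.
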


To prove Theorem \ref{twotorus}, we apply \cite[Theorem 1.1]{BHHN} to handle extremal contractions which contract a $E=\mathbb{CP}^{2}$ to a $\frac{1}{2}(1,1,1)$ cyclic quotient singularity, or when we contract aquadric cone to a double (cDV) point. The image of an extremal contraction always inherits a $\mathbb{C}^{*} \times \mathbb{C}^{*}$-action, see Blanchard's theorem below. 

Our motivation to study such $3$-folds comes partially from symplectic geometry. In particular, building on work of Tolman \cite{To1}, there have been several recent results regarding non-K\"{a}hler Hamiltonian torus action on compact symplectic $6$-manifolds with $b_{2}=2$; see \cite{GKZ1,LP2} and more recently with $b_{2}$ arbitrary \cite{GKZ2}.  To the authors knowledge, it remains an open question whether there is a closed symplectic $6$-manifold with a Hamiltonian $S^{1}$-action with isolated fixed points which is not diffeomorphic to a K\"{a}hler manifold.  

By Hodge theory a K\"{a}hler manifold with $b_{2}=2$ is projective \cite[Lemma 4.1]{LP2}, and Hamiltonian $S^{1}$-actions preserving the K\"{a}hler structure extend to algebraic $\mathbb{C}^{*}$-actions  (this follows from the main results of \cite{So}).  We consider Theorem \ref{newmain} as a step towards understanding the geography of K\"{a}hler $3$--folds with a $\mathbb{C}^{*}$-action with isolated fixed points, in sight of future comparisons with the symplectic case.

\textbf{Acknowledgments.} I would like to thank Alexander Kuznetsov, Dmitri Panov, Jaros\l{}aw Wi\'{s}niewski and Ziyu Zhang for helpful comments. I would like thank Freidrich Knop for providing the reference \cite[Prop. IV.13.5]{Bo}. I would like to thank an anonymous referee for comments. The earlier versions of this preprint were completed whilst the author was
 at the Institute of Mathematical Science in Shanghaitech University. 
\def\CP{\mathbb{C}{\rm P}}
\def\tr{{\rm tr}\,}
\def\endproof{{$\Box$}}

\section{Preliminaries} \label{pre}

\subsection{Preliminaries about algebraic group actions}

Firstly, we we will recall the a result of Sommese, which shows that certain holomorphic actions on projective varieties are algebraic. Recall that an action of an algebraic group $G$ on a projective variety $X$ is called algebraic if it is given by a morphism $\phi : G \times X \rightarrow X$. The statement is the following:
\begin{theorem}
Suppose that $\mathbb{C}^*$ acts holomorphically on a smooth projective variety, with at least one fixed point. Then, the action is algebraic.
\end{theorem}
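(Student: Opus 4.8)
The plan is to factor the action through the automorphism group of $X$ and then to show that the resulting one-parameter subgroup is a morphism of varieties; the fixed point is precisely what makes that last point true.

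\emph{Reduction to a statement about $\mathrm{Aut}(X)$.} Since $X$ is smooth and projective, the automorphism functor of $X$ is representable by a group scheme $\underline{\mathrm{Aut}}(X)$, locally of finite type over $\mathbb{C}$ (Grothendieck), whose identity component $G:=\underline{\mathrm{Aut}}^{0}(X)$ is a connected algebraic group acting on $X$ by a morphism $a\colon G\times X\to X$ (Matsumura--Oort). By Serre's GAGA together with Chow's theorem, every biholomorphism of $X$ is an algebraic automorphism, so as abstract groups $\mathrm{Aut}_{\mathrm{hol}}(X)=\mathrm{Aut}(X)(\mathbb{C})$; moreover, by Bochner--Montgomery, $\mathrm{Aut}_{\mathrm{hol}}(X)$ is a complex Lie group whose structure agrees on the identity component with that of $G^{\mathrm{an}}$, and a holomorphic action of $\mathbb{C}^{*}$ on $X$ induces a holomorphic homomorphism into $\mathrm{Aut}_{\mathrm{hol}}(X)$. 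A holomorphic action $\phi\colon\mathbb{C}^{*}\times X\to X$ therefore produces a holomorphic homomorphism $\rho\colon\mathbb{C}^{*}\to G^{\mathrm{an}}$ with $\phi=a\circ(\rho\times\mathrm{id}_{X})$, so it is enough to prove that $\rho$ is a morphism of varieties $\mathbb{C}^{*}\to G$.

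\emph{Using the fixed point.} Let $p\in X$ be a fixed point and let $\alpha\colon X\to\mathrm{Alb}(X)$ be the Albanese morphism, normalised so that $\alpha(p)=0$. By the theorem of Nishi--Matsumura the map $\tau\colon G\to\mathrm{Alb}(X)$, $g\mapsto\alpha(g\cdot p)$, is a homomorphism of algebraic groups whose kernel $N:=\ker\tau$ is a \emph{linear} algebraic group. Since $\rho(t)\cdot p=p$ for all $t$ we get $\tau(\rho(t))=0$, so $\rho$ factors as a holomorphic homomorphism $\rho\colon\mathbb{C}^{*}\to N^{\mathrm{an}}$ with $N$ linear. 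Picking a closed immersion $N\hookrightarrow GL_{n}$ of algebraic groups, it now suffices to show that any holomorphic homomorphism $\rho\colon\mathbb{C}^{*}\to GL_{n}(\mathbb{C})$ is algebraic. The map $w\mapsto\rho(e^{w})$ is a holomorphic homomorphism $(\mathbb{C},+)\to GL_{n}(\mathbb{C})$, hence of the form $w\mapsto\exp(wY)$ for some $Y\in\mathfrak{gl}_{n}(\mathbb{C})$; that this descends to $\mathbb{C}^{*}$ forces $\exp(2\pi iY)=\mathrm{id}$, hence $Y$ to be semisimple with integer eigenvalues. In a suitable basis $\rho(z)=\mathrm{diag}(z^{k_{1}},\dots,z^{k_{n}})$, a morphism of varieties $\mathbb{C}^{*}\to GL_{n}$; as it carries the reduced variety $\mathbb{C}^{*}$ into the closed subscheme $N$, it factors through $N$, so $\rho\colon\mathbb{C}^{*}\to G$ is a morphism and $\phi=a\circ(\rho\times\mathrm{id}_{X})$ is a morphism, i.e.\ the action is algebraic.

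\emph{Where the difficulty lies.} The essential point is to rule out that $\rho$ meets the abelian-variety part of $\mathrm{Aut}^{0}(X)$, and this is genuinely where the fixed point is used: a nontrivial one-parameter translation subgroup of an abelian variety is a holomorphic but non-algebraic homomorphism $\mathbb{C}^{*}\to A$, so a hypothesis of this kind is unavoidable. What makes the fixed point bite is the Nishi--Matsumura description of $\mathrm{Aut}^{0}(X)$ relative to the Albanese morphism, namely that its abelian part acts by fixed-point-free translations. The other ingredients -- representability of $\underline{\mathrm{Aut}}(X)$, the GAGA identification of holomorphic with algebraic automorphisms, and the agreement of the Lie-group structure on $\mathrm{Aut}_{\mathrm{hol}}(X)$ with $G^{\mathrm{an}}$ near the identity -- are standard, though the comparison of analytic and algebraic structures deserves care.
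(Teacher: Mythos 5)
Your argument is correct, but it is a genuinely different route from the paper's: the paper disposes of this statement in one line by citing Sommese's extension theorems for reductive group actions on compact K\"ahler manifolds, whereas you reconstruct the algebraicity directly. Your chain --- representability of the automorphism functor with $\mathrm{Aut}^{0}(X)$ an algebraic group, GAGA/Chow to identify holomorphic with algebraic automorphisms, Bochner--Montgomery to make $\rho\colon\mathbb{C}^{*}\to\mathrm{Aut}^{0}(X)^{\mathrm{an}}$ holomorphic, Nishi--Matsumura to push $\rho$ into the linear kernel of the Albanese homomorphism using the fixed point, and the elementary classification of holomorphic homomorphisms $\mathbb{C}^{*}\to GL_{n}(\mathbb{C})$ --- is sound; the only step that deserves the care you already flag is the identification of the analytic identity component of $\mathrm{Aut}_{\mathrm{hol}}(X)$ with $\mathrm{Aut}^{0}(X)(\mathbb{C})$, which follows since both have Lie algebra $H^{0}(X,T_{X})$, so the algebraic identity component is open in the Lie group and hence contains its identity component. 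What each approach buys: the citation of Sommese is shorter and works in the broader compact K\"ahler setting the paper later invokes (Hamiltonian $S^{1}$-actions on K\"ahler manifolds extending to $\mathbb{C}^{*}$-actions), while your proof is self-contained for projective $X$, makes explicit exactly where the fixed-point hypothesis enters (killing the abelian-variety part of $\mathrm{Aut}^{0}(X)$, where non-algebraic one-parameter subgroups genuinely exist, e.g.\ translations of an abelian variety), and shows the hypothesis is sharp.
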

\begin{proof}
This follows from the main results of \cite{So}.
\end{proof}

\subsubsection{Suminhiro's Theorem}

We wlill also need the following general fact, which follows immedietely from Suminhiro's theorem.

\begin{lemma} \label{twopoint}
Suppose that $W$ is a complex projective variety with normal singularities and with a $\mathbb{C}^*$-action. Then, for every non-trivial orbit $U$, if $\bar{U}$ denotes the Zariski closure of $U$, then $\bar{U} \setminus U$ consists of two distinct points.
\end{lemma}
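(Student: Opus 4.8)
The plan is to reduce the statement to the standard $\mathbb{C}^{*}$-action on $\mathbb{P}^{1}$ via Sumihiro's equivariant embedding theorem. Fix $x \in U$; since $U$ is non-trivial, the orbit map $\sigma \colon \mathbb{C}^{*} \to W$, $t \mapsto t \cdot x$, is a non-constant morphism with image $U$. Because $W$ is projective and $\mathbb{C}^{*}$ sits inside the smooth complete curve $\mathbb{P}^{1}$, the valuative criterion of properness shows that $\sigma$ extends uniquely to a morphism $\bar{\sigma} \colon \mathbb{P}^{1} \to W$. Its image is closed and contains $U$, hence equals $\bar{U}$; therefore $\bar{U} \setminus U \subseteq \{ \bar{\sigma}(0), \bar{\sigma}(\infty) \}$ has at most two points.

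Next I would check that $\bar{\sigma}(0)$ and $\bar{\sigma}(\infty)$ are $\mathbb{C}^{*}$-fixed and lie outside $U$. Letting $\mathbb{C}^{*}$ act on $\mathbb{P}^{1}$ in the standard way (fixing $0$ and $\infty$ and acting by multiplication on $\mathbb{C}^{*}$), the two morphisms $t \mapsto \bar{\sigma}(st)$ and $t \mapsto s \cdot \bar{\sigma}(t)$ from $\mathbb{P}^{1}$ to $W$ agree on the dense subset $\mathbb{C}^{*}$, hence everywhere, so $\bar{\sigma}$ is equivariant; in particular $\bar{\sigma}(0)$ and $\bar{\sigma}(\infty)$ are fixed points. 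Since every point of $U$ has orbit equal to $U$, which is non-trivial, no point of $U$ is fixed, so $\bar{U} \setminus U = \{ \bar{\sigma}(0), \bar{\sigma}(\infty) \}$.

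The remaining point, that these two fixed points are \emph{distinct}, is the only real content, and it is where the normality hypothesis enters. By Sumihiro's theorem, the normal projective variety $W$ admits a $\mathbb{C}^{*}$-equivariant closed embedding into some $\mathbb{P}(V)$ on which $\mathbb{C}^{*}$ acts linearly. Choosing an eigenbasis of $V$, write the action on homogeneous coordinates $[z_{0} : \cdots : z_{N}]$ with integer weights $w_{0}, \dots, w_{N}$, and write $x = [x_{0} : \cdots : x_{N}]$ with support $S = \{ i : x_{i} \neq 0 \}$. Then $\bar{\sigma}(0)$ is the point supported on those $i \in S$ with $w_{i}$ minimal and $\bar{\sigma}(\infty)$ the one supported on those $i \in S$ with $w_{i}$ maximal. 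If the minimal and maximal weights over $S$ coincided, then $t \cdot x = x$ for all $t$, contradicting non-triviality of $U$; hence the two limit points have disjoint supports in $\mathbb{P}(V)$ and are in particular distinct.

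The one genuine obstacle is this last step: in an arbitrary variety a $\mathbb{C}^{*}$-curve with a dense orbit could a priori be a nodal rational curve with its two ``ends'' glued together, and it is precisely Sumihiro's linearization theorem (which needs normality) that rules this out. Everything else in the argument is formal.
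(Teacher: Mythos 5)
Your proposal is correct and follows essentially the same route as the paper: both rest on Sumihiro's linearization theorem (where normality enters) to reduce to a linear $\mathbb{C}^{*}$-action on projective space, where the two limit points of a non-trivial orbit are visibly distinct fixed points. You simply spell out the weight computation and the extension of the orbit map to $\mathbb{P}^{1}$ that the paper leaves implicit in ``the result follows.''
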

\begin{proof}
By Suminhiro's theorem \cite[Theorem 1]{S}, there is a invariant open neighbourhod of the orbit which is $\mathbb{C}^*$ isomorphic to some orbit of a linear $\mathbb{C}^*$-action in projective space. The result follows.
\end{proof}

\subsubsection{The Byalinicki-Birula Decomposition}

The next preliminary result we will need, shows that the assumptions of Theorem \ref{newmain} determine the Betti numbers and fundamental group of the manifold. This is why we are naturally led to explore the ring structure of the cohomology of the underlying $6$-manifold, the most important invariant of which is the $
\Delta$-invariant, see Definition \ref{delta}.

\begin{lemma} \label{betti}
Suppose that $X$ is a smooth projective $3$-fold having a holomorphic $\mathbb{C}^{*}$-action with $6$ fixed points. Then $X$ is rational and simply connected. Furthermore,  $H^{2}(X,\mathbb{Z}) \cong \mathbb{Z}^{2}$ and $b_{3}(X) = 0$.
\end{lemma}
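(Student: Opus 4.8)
The plan is to use the Bialynicki-Birula (BB) decomposition associated to the $\mathbb{C}^*$-action on $X$. Since $X$ is smooth projective with finitely many fixed points, the BB plus-decomposition expresses $X$ as a disjoint union of affine cells $X = \bigsqcup_{p \in X^{\mathbb{C}^*}} C_p^+$, one cell for each fixed point $p$, with $C_p^+ \cong \mathbb{A}^{d_p}$ where $d_p$ is the dimension of the positive-weight subspace of $T_pX$ under the induced $\mathbb{C}^*$-representation. This is a filterable decomposition into locally closed affine subsets, so it yields a cohomology basis: $H^*(X,\mathbb{Z})$ is free, concentrated in even degrees, with one generator in degree $2(\dim X - d_p) = 2(3-d_p)$ for each fixed point $p$ (using the minus cells, or equivalently rephrasing the plus cells). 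In particular $X$ has no odd cohomology, so $b_1 = b_3 = 0$ immediately, and $X$ is simply connected because it admits a CW structure with only even-dimensional cells (the affine cells give a CW structure with no odd cells, forcing $\pi_1 = 0$ — here one must invoke that the decomposition is cellular in the appropriate sense, e.g. via \cite{S} or standard BB theory).

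Next I would pin down the Betti numbers using the count of fixed points. The source of the action (the unique fixed point whose cell is all of $T_pX$ positive, i.e. $d_p = 3$) contributes to $H^0$, and the sink (with $d_p = 0$) contributes to $H^6$; by Poincaré duality $b_0 = b_6 = 1$ and more generally $b_{2k} = b_{6-2k}$. With $6$ fixed points total and $b_0 = b_6 = 1$, the remaining $4$ fixed points are distributed among $H^2$ and $H^4$, and Poincaré duality forces $b_2 = b_4$, hence $b_2 = b_4 = 2$ and $b_1=b_3=b_5=0$. This gives $H^2(X,\mathbb{Z}) \cong \mathbb{Z}^2$ once we note $H^2$ is torsion-free (again from the cell structure: all homology is free).

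For rationality, the BB decomposition already exhibits $X$ as containing a dense open affine cell $C_p^+ \cong \mathbb{A}^3$ (the big cell, coming from the source), so $X$ is rational. Alternatively, one could argue via the MMP since $X$ is uniruled with the stated Betti numbers, but the BB big cell is the cleanest route.

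The main obstacle, and the only place needing genuine care, is the claim of simple-connectivity: having a CW decomposition with only even cells does force $\pi_1 = 0$, but one must be sure the BB stratification genuinely refines to such a CW structure — this requires knowing the cells attach along lower-dimensional cells, which is part of the "filtrable" structure of the BB decomposition for smooth projective varieties (as in the work of Bialynicki-Birula, and compatible with Sumihiro's equivariant theory). I would state this carefully, citing the relevant structural results, rather than re-deriving it. Everything else — torsion-freeness of cohomology, the numerology of Betti numbers, rationality via the big cell — follows formally once the decomposition is in hand.
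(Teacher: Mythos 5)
Your route is in substance the same as the paper's: the Bialynicki--Birula decomposition does all the work. The paper's proof is shorter only because it outsources the Betti-number statement to \cite[Theorem 2]{CS} (vanishing of odd Betti numbers, with the fixed-point count controlling the even ones), and your numerology ($b_0=b_6=1$ by connectedness and compactness, then $b_2=b_4=2$ from the remaining four fixed points and Poincar\'e duality) is exactly what that citation plus the six-point hypothesis gives. Rationality via the dense open cell $\mathbb{A}^3$ is literally the paper's argument.

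The one step that is not sound as written is simple connectivity. You derive it from the claim that the BB cells form a CW structure with only even-dimensional cells; but the BB decomposition of a smooth projective variety is only known to be filtrable, which suffices for the homology computation (freeness, concentration in even degrees) via the filtration long exact sequences, yet does not by itself produce a CW structure having the BB cells as cells --- closures of cells need not be unions of cells, and Sumihiro's theorem \cite{S} (equivariant completion/linearization) says nothing about this, so the reference you would lean on does not exist in the form you need. You flag the issue yourself, and the fix is cheap and avoids the CW claim entirely: either argue as the paper does, that $X$ is rational and the fundamental group of a smooth projective variety is a birational invariant, so $\pi_1(X)\cong\pi_1(\mathbb{P}^3)=1$; or note that the complement of the dense cell $\mathbb{A}^3$ is a closed complex subvariety of real codimension at least $2$, so $\pi_1(\mathbb{A}^3)\to\pi_1(X)$ is surjective and hence $\pi_1(X)=1$. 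With $\pi_1(X)=1$ secured, torsion-freeness of $H^2(X,\mathbb{Z})$ follows either from the universal coefficient theorem (as in the paper) or from your freeness-of-homology argument, and the rest of your proof stands.
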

\begin{proof}
By the Bialynicki-Birula decomposition \cite{BB}, $X$ contains a Zariski open subset isomorphic to $\mathbb{A}^3$, in particular it is rational. Since the topological fundamental group is a birational invariant $X$ is simply connected. \cite[Theorem 2]{CS} implies that $b_{2}(X) = 2$, $b_{3}(X) = 0$. Finally, $H^{2}(X,\mathbb{Z}) \cong \mathbb{Z}^{2}$ follows from $b_{2}(X)=2$, $\pi_{1}(X) = \{1\}$, and the universal coefficient theorem.
\end{proof}

\subsubsection{Blanchard's Theorem}

We will need the following simple consequence of Blanchard's theorem. See \cite{Br} for a modern reference and a much more general result. Let $T = (\mathbb{C}^{*})^{k} $ be an algebraic torus. Throughout the paper we assume that group actions are effective unless stated otherwise.
 \begin{theorem} \label{blan}
Suppose that $X$ is a smooth projective variety with a $T$-action with finitely many fixed points. Suppose that $F: X \rightarrow Y$ is a Mori extremal contraction. Then there is a $T$-action on $Y$ having finitely many fixed points and making $F$ equivarient.

 If  $\dim(X)=\dim(Y)$ i.e. $F$ is a divisorial contraction, then the action on $Y$ is effective.
\end{theorem}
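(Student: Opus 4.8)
The plan is to invoke Blanchard's theorem in the form given by \cite{Br}: if a connected algebraic group $G$ acts on a projective variety $X$ and $f : X \to Y$ is a proper morphism with $f_* \mathcal{O}_X = \mathcal{O}_Y$ and connected fibres, then $G$ acts on $Y$ so that $f$ is equivariant. A Mori extremal contraction $F : X \to Y$ satisfies exactly these hypotheses — it is proper, $Y$ is projective, and by definition $F_* \mathcal{O}_X = \mathcal{O}_Y$ with connected fibres — so the $T$-action descends to $Y$ making $F$ equivariant. This is the essential content; the remaining work is to check the two finiteness/effectivity claims.

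For the first claim, I would argue that $F$ restricts to a bijection on fixed loci in the relevant sense. Since $F$ is equivariant and surjective, $F(X^T) \supseteq Y^T$: indeed any $y \in Y^T$ has $T$-invariant fibre $F^{-1}(y)$, which is a projective $T$-variety and hence contains a fixed point by Borel's fixed point theorem, so $y \in F(X^T)$. As $X^T$ is finite, $Y^T = F(X^T)$ is finite. (One should note $F(x)$ is automatically fixed for $x \in X^T$ by equivariance, so $F(X^T) \subseteq Y^T$ as well, giving equality.) This handles the case $\dim X > \dim Y$ (fibre type and small contractions) as well as the divisorial case.

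For the second claim — that when $F$ is divisorial the induced action on $Y$ is effective — the point is that the kernel $N \subseteq T$ of the $Y$-action is a subgroup acting trivially on $Y$ but, since the original action on $X$ is effective, acting non-trivially on $X$. When $\dim X = \dim Y$, $F$ is birational and is an isomorphism over the complement of the exceptional divisor $E$ and its image, which is a proper closed subset of $Y$. Thus $N$ acts trivially on the dense open set $X \setminus E \cong Y \setminus F(E)$, and since the action on $X$ is algebraic (hence determined by its restriction to a dense open set) $N$ acts trivially on all of $X$, forcing $N = \{1\}$ by effectivity. Hence the action on $Y$ is effective.

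The only genuine subtlety is making sure the cited form of Blanchard's theorem applies with $Y$ merely normal (or even just the singular image of a Mori contraction) rather than smooth; this is covered by the generality of \cite{Br}, so I expect no real obstacle — the proof is essentially a matter of assembling Blanchard's descent, Borel's fixed point theorem, and the density argument for effectivity.
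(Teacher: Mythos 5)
Your proposal is correct and takes essentially the same route as the paper's proof: Blanchard's theorem (as in \cite{Br}) gives the descended action, fixed points inside the $T$-invariant fibres over points of $Y^{T}$ give finiteness, and effectivity in the divisorial case follows because a non-trivially acting element must move a point of the dense open set $X \setminus E$, which maps isomorphically into $Y$. The extra details you supply (Borel's fixed point theorem for the fibres, the density/closedness argument for the kernel) are exactly the steps the paper leaves implicit.
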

\begin{proof}
The existence of the action on $Y$ is Blanchard's theorem. Note that the preimage of each fixed point contains a fixed point, hence the induced action on the base has finite fixed point set. To show the last statement, let $E$ be the contracted divisor.  Then note that any non-trivial $g \in T$ moves a point in $X \setminus E$, hence the corresponding automorphism of $Y$ is non-trivial.
\end{proof}

In addition we will need the following technichal Lemma.
\begin{lemma} \label{divisor}
Suppose that $\phi: 	X  \rightarrow Y$ is a $\mathbb{C}^{*}$-equivariant Mori extremal contraction, and there is a divisor $D \subset X$ such that $\phi(D)$ is a point. Then, consider a non-trivial orbit $O = \mathbb{C}^*.p$, let $\bar{O}$ be its Zariski closure. Then, either $\bar{O} \subset D$ or $\bar{O}$ intersects $D$ in $0$ or $1$ points.
\begin{proof}
Since the map is equivariant $O$ is either contained in $D$ or it is disjoint from it. The closure of any non-trivial orbit $O$ contains two additional points $p_{1}$ and $p_{2}$ by   Lemma \ref{twopoint},. So we just have to rule out the case when these two additional points are contracted to $p$ by $\phi$. But in such a case, there is an orbit in $Y$ whose closure contains only one additional point, contradicting Lemma \ref{twopoint}.
\end{proof}

\end{lemma}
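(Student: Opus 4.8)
The plan is to exploit the $\mathbb{C}^{*}$-invariance of $D$ together with Lemma \ref{twopoint} applied on both $X$ and $Y$. First I would record that $D$ is $\mathbb{C}^{*}$-invariant: being (a component of) the locus contracted by the equivariant extremal contraction $\phi$, it is preserved by the connected group $\mathbb{C}^{*}$, and consequently $p := \phi(D)$ is a $\mathbb{C}^{*}$-fixed point of $Y$. Because $D$ is invariant and closed, any non-trivial orbit $O = \mathbb{C}^{*}.q$ is either entirely contained in $D$ — in which case $\bar{O} \subseteq \bar{D} = D$, which is the first alternative of the statement — or disjoint from $D$. From now on assume $O \cap D = \emptyset$.

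By Lemma \ref{twopoint} applied to $\bar{O} \subseteq X$, we have $\bar{O}\setminus O = \{p_{1}, p_{2}\}$ with $p_{1} \neq p_{2}$, both fixed points. Hence $\bar{O} \cap D \subseteq \bar{O}\setminus O = \{p_{1}, p_{2}\}$, so $\bar{O}$ meets $D$ in $0$, $1$, or $2$ points, and it only remains to rule out that it meets $D$ in both of the points $p_{1},p_{2}$. So suppose $p_{1}, p_{2} \in D$, hence $\phi(p_{1}) = \phi(p_{2}) = p$. Consider the image orbit $\phi(O) \subseteq Y$. Since $\phi$ is proper, $\phi(\bar{O}) = \overline{\phi(O)} = \phi(O) \cup \{p\}$, and $p \notin \phi(O)$ because $\phi(O)$ is a $\mathbb{C}^{*}$-orbit while $p$ is fixed. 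If $\phi(O)$ is $1$-dimensional, then Lemma \ref{twopoint} applied to $\overline{\phi(O)} \subseteq Y$ forces $\overline{\phi(O)}\setminus \phi(O)$ to consist of two distinct points, contradicting the equality $\overline{\phi(O)}\setminus \phi(O) = \{p\}$. If instead $\phi(O)$ is a single point, then $\bar{O}$ lies in a fibre of $\phi$, and since $\phi(p_{1}) = p$ this fibre is $\phi^{-1}(p)$; using that $D$ is the whole fibre over $p$ (for a divisorial contraction $D$ is exactly the irreducible exceptional divisor, so $\phi^{-1}(p) = D$) this contradicts $O \cap D = \emptyset$. Either way we reach a contradiction, which completes the argument.

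The step I expect to be the only real subtlety is the last one: the possibility that the generic orbit of $\bar{O}$ is itself contracted by $\phi$, rather than mapping to a non-trivial orbit of $Y$. This is handled by identifying $D$ with the full fibre $\phi^{-1}(p)$, which is automatic when $\phi$ is a divisorial contraction; for a fibre-type contraction one should additionally observe that a divisor mapping to a point forces the relative dimension to be $\dim X - 1$, so once more $D$ is (a component of) the full fibre over the fixed point $p$ and the same contradiction applies. Everything else is a routine bookkeeping of closed invariant subsets of an orbit closure and two applications of Lemma \ref{twopoint}.
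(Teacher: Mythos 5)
Your proof is correct and follows essentially the same route as the paper's: use invariance of $D$ to get the dichotomy ($\bar{O}\subset D$ or $O\cap D=\emptyset$), then apply Lemma \ref{twopoint} in both $X$ and $Y$ to exclude the possibility that both limit points $p_{1},p_{2}$ lie in $D$, since the image orbit's closure would then acquire only the single extra point $p$. The only difference is that you explicitly justify the invariance of $D$ and treat the case where $O$ itself is contracted (identifying $D$ with $\phi^{-1}(p)$ in the divisorial case), steps the paper's proof passes over tacitly — extra care within the same argument rather than a different approach.
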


I would like to thank Friedrich Knop for pointing to me to the reference of the following theorem. We will use this to rule out certain types of extremal contractions for smooth $3$-folds with a $\mathbb{C}^*$-action.

\begin{theorem} \cite[Prop. IV.13.5]{Bo}. \label{numberfp}
Suppose that $X$ is an irreducible, complex projective variety with normal singularities. If $\mathbb{C}^*$ acts on $X$ then there are at least $\dim(X) +1$ fixed points.
\end{theorem}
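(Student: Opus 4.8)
The plan is to reduce to the case of finitely many fixed points and then sandwich the total Betti number of $X$ between $\dim X + 1$ and the number of fixed points. If $X^{\mathbb{C}^*}$ is infinite there is nothing to prove, so assume $X^{\mathbb{C}^*} = \{p_1,\dots,p_k\}$ is finite; write $n = \dim X$, so that the goal becomes $k \geq n+1$. (One always has $X^{\mathbb{C}^*}\neq\emptyset$, by Borel's fixed point theorem, or trivially when the action is trivial.)

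First I would record the easy inequality $\dim_{\mathbb{Q}} H^*(X;\mathbb{Q}) \geq n+1$. Fix a projective embedding of $X$ and let $\omega\in H^2(X;\mathbb{Q})$ be the hyperplane class. Since $X$ is irreducible and proper of dimension $n$, $H^{2n}(X;\mathbb{Q})\cong\mathbb{Q}$ and $\omega^n$ is $\deg X$ times a generator, hence nonzero; as $\omega^j\smile\omega^{n-j} = \omega^n\neq 0$, each $\omega^j$ with $0\leq j\leq n$ is a nonzero class in $H^{2j}(X;\mathbb{Q})$, and these $n+1$ classes lie in distinct degrees.

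Second, and this is the heart of the matter, I would show $\dim_{\mathbb{Q}} H^*(X;\mathbb{Q}) = k$. By Sumihiro's theorem $X$ embeds $\mathbb{C}^*$-equivariantly in some $\mathbb{P}(V)$ with $\mathbb{C}^*$ acting linearly, $V = \bigoplus_a V_a$; let $\pi_a$ be the projection onto $V_a$ and $w_i$ the weight of $p_i$. One checks that the Bialynicki--Birula cell $X_i^+ = \{x\in X : \lim_{t\to 0} t\cdot x = p_i\}$ equals $\{x\in X : \pi_a(x) = 0 \text{ for } a<w_i,\ [\pi_{w_i}(x)] = p_i\}$, which is a closed subvariety of the affine space $\{[v]\in\mathbb{P}(V_{\geq w_i}) : [\pi_{w_i}(v)] = p_i\}$; hence each $X_i^+$ is affine, is locally closed in $X$, and contains $p_i$ as its only fixed point, and $X = \bigsqcup_i X_i^+$. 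Since the $\mathbb{C}^*$-action on $X_i^+\setminus\{p_i\}$ has no fixed points, additivity of the compactly supported Euler characteristic together with $\chi_c$ of a positive-dimensional orbit being $0$ gives $\chi_c(X_i^+) = 1$, so that $\chi(X) = \sum_i \chi_c(X_i^+) = k$ (this is the localization identity $\chi(X)=\chi(X^{\mathbb{C}^*})$). To upgrade $\chi(X) = k$ to $\dim_{\mathbb{Q}} H^*(X;\mathbb{Q}) = k$ one needs $H^{\mathrm{odd}}(X;\mathbb{Q}) = 0$; granting this and combining with the previous paragraph gives $k = \dim_{\mathbb{Q}} H^*(X;\mathbb{Q}) \geq n+1$.

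The vanishing $H^{\mathrm{odd}}(X;\mathbb{Q}) = 0$ is the step I expect to be the main obstacle. For smooth $X$ it is classical, since then the cells $X_i^+$ are affine spaces and $X$ carries a cell decomposition with cells only in even real dimension. For $X$ merely normal the cells may be singular affine varieties (e.g. affine cones), so one must either appeal to the finer structure of the Bialynicki--Birula decomposition for normal projective varieties, or argue by induction on $\dim X$: affine-ness of the source cell $X_{\mathrm{source}}^+$ together with normality of $X$ forces $X\setminus X_{\mathrm{source}}^+$ to be a nonempty invariant divisor, which one feeds into the inductive hypothesis — the delicate point being that this divisor can be reducible with non-normal components, and since normalization can only increase the fixed-point count one must argue at the level of Euler characteristics rather than naively on subvarieties. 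Everything else — the reduction to a finite fixed locus, the ample-class estimate, and the Euler-characteristic count — is routine. (In the applications of this result in the paper $X$ is a surface or a threefold that is in fact rational, so $H^{\mathrm{odd}}(X;\mathbb{Q}) = 0$ is automatic there.)
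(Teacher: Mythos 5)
First, a remark on the comparison: the paper does not prove this statement at all --- it is quoted verbatim from Borel's book (Prop.\ IV.13.5 of \cite{Bo}) --- so your argument has to be judged on its own, and it does not close. The reduction to a finite fixed locus, the inequality $\dim_{\mathbb{Q}}H^{*}(X;\mathbb{Q})\geq n+1$ from powers of the hyperplane class (this is where irreducibility enters), and the count $\chi(X)=k$ via the affine Bia\l{}ynicki--Birula cells are all correct. But the entire proof hinges on $H^{\mathrm{odd}}(X;\mathbb{Q})=0$, which you explicitly do not establish, and this is not a routine missing step: for singular normal $X$ the cells $X_i^{+}$ are singular affine varieties whose compactly supported cohomology genuinely can be odd. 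For instance the affine cone over $\mathbb{P}^{1}\times\mathbb{P}^{1}$ has $H^{3}_{c}\neq 0$, and it occurs as the source cell of a $\mathbb{C}^{*}$-action with $5$ isolated fixed points on the rank-$4$ quadric threefold $\{x_{0}x_{1}=x_{2}x_{3}\}\subset\mathbb{P}^{4}$; there the odd classes happen to die in $H^{*}(X)$, but showing this in general is a Carrell--Goresky-type decomposition theorem for singular varieties, i.e.\ an input substantially deeper than the fixed-point count you are trying to prove. Neither of your two proposed repairs is carried out, and the second (inducting on $X\setminus X^{+}_{\mathrm{source}}$) stalls exactly at the difficulties you name: the boundary components need not be normal, and normalization moves the fixed-point count in the wrong direction. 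Your parenthetical fallback is also false: rationality does not imply $H^{\mathrm{odd}}(X;\mathbb{Q})=0$ for threefolds (the blow-up of $\mathbb{P}^{3}$ along a curve of genus $g$ is rational with $b_{3}=2g$), and the one place the paper uses Theorem \ref{numberfp} (Lemma \ref{conres}) applies it to a normal rational threefold with a terminal singular point, so the fallback does not even cover the intended application.

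A second sign that this is not the intended route: Borel's book works over an arbitrary algebraically closed field, so the cited proof cannot rely on singular cohomology or the K\"ahler hyperplane class. The standard algebraic argument is: use Sumihiro's theorem (here is where normality is used) to embed $X$ equivariantly in $\mathbb{P}(V)$ with $V=\bigoplus_{a}V_{a}$ a weight decomposition; every fixed point lies in some $\mathbb{P}(V_{a})$, which defines a weight $\mu(p)=a$ for each fixed point; and one proves by induction on $\dim X$ that the fixed points of an irreducible invariant subvariety realize at least $\dim X+1$ distinct values of $\mu$ (the maximal and minimal weights supported on $X$ are always realized by limits of a general point, and one descends to an invariant divisor). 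Counting realized weights rather than points is precisely what dissolves the multiplicity and non-normality worries in the inductive step that your Euler-characteristic version cannot handle. As it stands, your write-up is an honest reduction of the theorem to the unproved assertion $H^{\mathrm{odd}}(X;\mathbb{Q})=0$, not a proof.
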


Finally, we state a standard result about holomorphic $\mathbb{C}^*$-actios, which we will need.

\begin{theorem} \label{topeu}
Suppose that $X$ is a smooth projective variety with a $\mathbb{C}^*$-action with finitely many fixed points. Then the number of fixed points is equal to $\chi_{top}(X)$.
\end{theorem}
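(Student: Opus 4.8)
The plan is to deduce this from the Bialynicki-Birula decomposition, already invoked in the proof of Lemma \ref{betti}. First I would recall its precise form: for a smooth projective variety $X$ with a $\mathbb{C}^*$-action, \cite{BB} writes $X$ as a disjoint union of locally closed subvarieties $X_1^+,\dots,X_r^+$, indexed by the connected components $F_1,\dots,F_r$ of the fixed locus $X^{\mathbb{C}^*}$, where $X_i^+ = \{x \in X : \lim_{t\to 0} t\cdot x \in F_i\}$, and the natural map $X_i^+ \to F_i$, $x \mapsto \lim_{t\to 0} t\cdot x$, is a Zariski-locally trivial fibration whose fibres are affine spaces $\mathbb{A}^{n_i}$, with $n_i$ the dimension of the subspace of $T_pX$ (any $p \in F_i$) on which $\mathbb{C}^*$ acts with positive weights. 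Moreover this decomposition is filtrable: the indices may be ordered so that $X_1^+ \cup \cdots \cup X_k^+$ is closed in $X$ for every $k$.

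Since by hypothesis the fixed locus is finite, each component $F_i$ is a single point, so $r$ is exactly the number of fixed points and each cell $X_i^+$ is isomorphic to an affine space $\mathbb{A}^{n_i}$.

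To conclude I would use additivity of the compactly supported Euler characteristic $\chi_c$ over a decomposition of a variety into locally closed pieces, giving $\chi_c(X) = \sum_{i=1}^{r} \chi_c(X_i^+) = \sum_{i=1}^{r} \chi_c(\mathbb{A}^{n_i}) = r$, because $\chi_c(\mathbb{A}^n) = 1$ for all $n \geq 0$. For a complex algebraic variety the compactly supported Euler characteristic coincides with the ordinary topological Euler characteristic (and $X$ is compact here in any case), so $\chi_{top}(X) = r = \#X^{\mathbb{C}^*}$.

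There is no substantive obstacle here; the only points needing care are quoting the Bialynicki-Birula theorem in the form that supplies affine cells together with the filtrability that legitimises the additive Euler-characteristic computation, and the standard identity $\chi_c = \chi_{top}$ for complex varieties. Alternatively one could argue purely topologically: passing to the maximal compact $S^1 \subset \mathbb{C}^*$ one has $X^{S^1} = X^{\mathbb{C}^*}$, and $\chi_{top}(X) = \chi_{top}(X^{S^1})$ since every non-fixed $S^1$-orbit is a circle and hence contributes $0$, which again yields the count of fixed points.
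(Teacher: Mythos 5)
Your argument is correct, but it is not the route the paper takes: the paper proves this theorem by a one-line citation to \cite[Theorem 2]{CS} (Carrell--Sommese), which expresses the Betti numbers of $X$ in terms of the fixed-point data of the $\mathbb{C}^*$-action, so that summing Betti numbers immediately identifies $\chi_{top}(X)$ with the number of fixed points; this is the same result the paper also uses in Lemma \ref{betti} to get $b_2=2$, $b_3=0$. You instead give a self-contained proof from the Bialynicki--Birula decomposition into affine cells indexed by the (here, finitely many, hence point) fixed components, plus additivity of the compactly supported Euler characteristic over a locally closed decomposition and $\chi_c(\mathbb{A}^n)=1$; your alternative purely topological argument via the maximal compact $S^1\subset\mathbb{C}^*$ and $\chi_{top}(X)=\chi_{top}(X^{S^1})$ is also standard and valid. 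What your approach buys is independence from the Carrell--Sommese machinery (you only need \cite{BB}, which the paper already invokes elsewhere), and in fact you need less than you quote: filtrability of the decomposition is not required for the $\chi_c$ computation, since additivity holds for any finite decomposition into locally closed subvarieties; what the paper's citation buys is brevity and, beyond the Euler characteristic, the individual Betti numbers, which the paper needs anyway. Since $X$ is projective, the remark that $\chi_c=\chi_{top}$ is immediate.
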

\begin{proof}
This follows directly from \cite[Theorem 2]{CS}.
\end{proof}

\subsection{The $\Delta$-invariant}
Now we breifly switch to the context of real manifolds. In this subsection we recall a basic invariant of closed $6$-manifolds, encoded in the cubic intersection form, which we refer to as the $\Delta$-invariant. For a coordinate free definition and a comprehensive treatment of this invariant see \cite{OV}.
\begin{definition}  \label{delta}
Let $X$ be a closed $6$-manifold with $\pi_{1}(X)=1$, $b_{3}(X)=0$ and $b_{2}(X)=2$, let $a,b$ be an integral basis of $H^{2}(X,\mathbb{Z}) $  \footnote{Note that $H^{2}(X,\mathbb{Z}) \cong \mathbb{Z}^{2}$ by the vanishing of $\pi_{1}$ and the universal coefficient theorem.}. Then, if we let $a_{0}=\int_{X} a^{3}$, $a_{1}=\int_{X} a^{2}b$, $a_{2}=\int_{X} ab^{2}$ and $a_{3} = \int_{X} b^{3}$. Then define: $$\Delta(X) = (a_{0}a_{3} - a_{1}a_{2})^{2}-4(a_{0}a_{2}-a_{1}^{2})(a_{1}a_{3}-a_{2}^{2}). $$   
\end{definition}
$\Delta(X)$ depends only on the intersection cubic form of $X$ \cite[Section 3.1]{OV}, hence is a topological invariant. Despite its seemingly convoluted appearance, the $\Delta$-invariant is an informative topological invariant, particularly in the context of complex geometry \cite[Section 5.2]{OV}. 

Next, we prove a Lemma which computes the $\Delta$-invariant of the blow up of a complex $3$-fold $X$ with $b_{2}(X) = 1$ along a rational curve of degree $n$.

\begin{lemma} \label{blowupsequence}
Suppose that $X$ is a closed complex $3$-fold such that $ H^{2}(X, \mathbb{Z}) = \mathbb{Z}$ and  $H^{2}(X, \mathbb{Z})$ is generated by a class $h$ satisfying $d =\int_{X} h^3 > 0$. Suppose that $c_{1}(X) = r h$ for some $r>0$. If $C$ is a smooth rational curve in $X$ with $[C] = nh^2$, then we have that $$\Delta( Bl_{C} (X)) =   d^2 ((2-rnd)^2 - 4 n^3 d^2).$$
\end{lemma}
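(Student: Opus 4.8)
The plan is to compute the cubic intersection form of $\widetilde{X} := Bl_C(X)$ in a suitable integral basis of $H^2(\widetilde X,\mathbb Z)$ and then substitute into the formula of Definition \ref{delta}. First I would recall the standard description of the cohomology of a blow-up of a smooth threefold along a smooth curve: writing $\pi : \widetilde X \to X$ for the blow-up and $E$ for the exceptional divisor, we have $H^2(\widetilde X,\mathbb Z) = \pi^*H^2(X,\mathbb Z) \oplus \mathbb Z[E]$, so that $\{\,\pi^*h,\ [E]\,\}$ is an integral basis. The intersection numbers are then obtained from the projection formula together with the classical relations for a blow-up along a curve $C$ of genus $g$ (here $g=0$) in a threefold: $\pi^*h \cdot \pi^*h \cdot \pi^*h = d$, $\pi^*h\cdot\pi^*h\cdot E = 0$, $\pi^*h \cdot E \cdot E = -\,\pi^*h\cdot C = -n$ (since $[C] = nh^2$), and $E^3 = -\deg(N_{C/X}) = -(2g-2) - \langle c_1(X), C\rangle = 2 - rn d$ — here one uses $c_1(X) = rh$ and $\langle h, C\rangle = \int_X h\cdot [C] = nd$, and the adjunction/normal-bundle computation $\deg N_{C/X} = \langle -K_X, C\rangle + 2g - 2$. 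These four numbers are exactly $a_0, a_1, a_2, a_3$ for the ordered basis $(a,b) = (\pi^*h, E)$.

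Next I would simply plug $a_0 = d$, $a_1 = 0$, $a_2 = -n$, $a_3 = 2 - rnd$ into
$$\Delta = (a_0 a_3 - a_1 a_2)^2 - 4(a_0 a_2 - a_1^2)(a_1 a_3 - a_2^2).$$
The first term becomes $(d(2-rnd))^2 = d^2(2-rnd)^2$; the second becomes $-4(-nd)(-n^2) = -4 n^3 d^2$; combining gives $\Delta(Bl_C(X)) = d^2\bigl((2-rnd)^2 - 4n^3 d^2\bigr)$, as claimed. Since $\{\pi^*h, E\}$ is genuinely an integral basis of $H^2(\widetilde X,\mathbb Z)$ (the blow-up of a threefold along a smooth curve has no torsion appearing here beyond that of $X$, and $H^2$ is free), the value of $\Delta$ computed in this basis is the correct topological invariant; $\Delta$ is basis-independent by \cite[Section 3.1]{OV}, so no further check is needed.

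The only genuinely delicate point — and the step I expect to need the most care — is the computation of the self-intersection $E^3$, i.e. getting the normal bundle degree and the sign conventions exactly right. One has $E = \mathbb P(N_{C/X})$ a ruled surface over $C \cong \mathbb P^1$, $\mathcal O_E(E) = \mathcal O_E(-1)$ the tautological quotient/sub bundle (convention-dependent), and $E^3 = (E|_E)^2$ computed on the surface $E$ equals $-\deg N_{C/X}$; then $\deg N_{C/X}$ is pinned down by adjunction, $\deg K_C = \deg K_X|_C + \deg \det N_{C/X}$, which for $C = \mathbb P^1$ gives $-2 = -\langle -K_X, C\rangle + \deg N_{C/X}$, hence $\deg N_{C/X} = \langle K_X, C\rangle + 2 = -rnd + 2$. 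It is worth double-checking this against the known case $X = \mathbb{CP}^3$, $r = 4$, $d = 1$, and a line $C$ ($n=1$): then $\deg N_{C/X} = 2 - 4 = -2$? That is wrong for a line in $\mathbb{P}^3$, whose normal bundle is $\mathcal O(1)^{\oplus 2}$ of degree $2$ — so in fact $\langle -K_X, C\rangle = rnd = 4$ and $\deg N_{C/X} = 2 + (rnd - 2) = rnd = 4$ would follow from $-2 = \langle K_X,C\rangle + \deg N$, i.e. $\deg N = -2 - \langle K_X, C\rangle = -2 + rnd$. Thus the correct value is $E^3 = -\deg N_{C/X} = 2 - rnd$, which is what the statement wants; but this sanity check (line in $\mathbb P^3$ gives $E^3 = 2 - 4 = -2$, matching $\mathcal O_E(-1)^2$ on $\mathbb P^1\times\mathbb P^1$) is exactly the sort of thing I would verify explicitly before trusting the final formula.
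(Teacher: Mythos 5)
Your overall strategy is exactly the paper's: take the integral basis $\{\pi^{*}h,\,E\}$ of $H^{2}(Bl_{C}(X),\mathbb{Z})$, compute the four cubic intersection numbers (with $E^{3}=-\deg N_{C/X}=2-rnd$ obtained from adjunction, i.e.\ from $TX|_{C}\cong \mathcal{O}(2)\oplus N_{C}$), check that this really is an integral basis, and substitute into Definition \ref{delta}. Your treatment of $E^{3}$, including the sanity check against a line in $\mathbb{CP}^{3}$, is correct and matches the paper.

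There is, however, a concrete error in the middle intersection number. Since $[C]=nh^{2}$ and $d=\int_{X}h^{3}$, one has $\int_{C}h=\int_{X}h\cdot nh^{2}=nd$, so $a_{2}=\pi^{*}h\cdot E^{2}=-\int_{C}h=-nd$, not $-n$; indeed you record $\langle h,C\rangle=nd$ correctly in the very same sentence, so your two statements are inconsistent with each other. As written, the plug-in step does not produce the claimed formula: with $a_{0}=d$, $a_{1}=0$, $a_{2}=-n$, $a_{3}=2-rnd$, the second term is $-4(a_{0}a_{2}-a_{1}^{2})(a_{1}a_{3}-a_{2}^{2})=-4(-nd)(-n^{2})=-4n^{3}d$ (not $-4n^{3}d^{2}$ as you write), and even $d^{2}(2-rnd)^{2}-4n^{3}d^{2}$ would not equal $d^{2}\bigl((2-rnd)^{2}-4n^{3}d^{2}\bigr)$, so the "combining" step is not valid arithmetic from your stated values. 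Replacing $a_{2}$ by its correct value $-nd$ fixes everything at once: the second term becomes $-4(-nd^{2})(-n^{2}d^{2})=-4n^{3}d^{4}$, and $\Delta=d^{2}(2-rnd)^{2}-4n^{3}d^{4}=d^{2}\bigl((2-rnd)^{2}-4n^{3}d^{2}\bigr)$, which is precisely the computation in the paper's proof.
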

\begin{proof}
Throughout, we abuse notation slightly by not distinguishing between subvarieties and their Poincar\'{e} dual classes. Let $Y = Bl_{C} (X)$ and $\pi : Bl_{C} (X) \rightarrow X$ denoted the map associated to the curve blow up. Let $e$ be the exceptional divisor of the blow-up. By another slight abuse of notation we write $h = \pi^{*}h$. By direct computation:

$$\int_{C} c_{1}(X) = \int_{X} r n  h^3  = rnd .$$

On the other hand, $TX|_C \cong \mathcal{O}(2) \oplus N_C$ where $N_C$ is the normal bundle of $C \cong \mathbb{CP}^1$. Letting $k = \int_{C} c_{1}(N_C)$, the above gives $2+k = rnd$ .

The computation of the following intersection numbers may be computing using the fact that the normal bundle of the exceptional divisor is the tautological bundle $\mathcal{O}(-1)$ (in the sense of projective bundles) and the adjunction formula.  For a reference see \cite[Proposition 14]{OV}):
$$ a_0 = \int_{Y} h^3 =  d.$$
$$ a_{1} = \int_{Y}h^2 e = 0. $$ 
$$ a_{2} =  \int_{Y}h e^2 = -\int_{C}h = -nd. $$
$$ a_{3} =  \int_{Y} e^3 =  -k = 2 - rnd. $$

On the other hand, $\{e,h\}$ is an integral basis of $H^{2}(Y,\mathbb{Z})$. To prove this, consider a general class $a \in H^{2}(Y,\mathbb{Z})$ the class $\pi_{*}(a) = kh$ for some $k \in \mathbb{Z}$.  Then, the class $\pi_{*}(a- kh)=0$  and so $(a- kh)$ is a integer multiple of $e$ \footnote{to show that $e$ is a primitive class, one may use the form of the normal bundle given above}. 	Finally, computing using Definition \ref{delta} gives: 
$$\Delta(Y)= d^2(2-rnd)^2 - 4nd^2(-nd)^2 = d^2 ((2-rnd)^2 - 4 n^3 d^2) .$$ \end{proof}

\begin{corollary}
With the notation as in Lemma \ref{blowupsequence} suppose that there is a sequence of rational curves $C_m$ for $m \geq 1$ such that $[C_{m}]= mh$. Then $\{ \Delta(BL_{C_m}(X)): \; m \geq 1 \}$ is bounded above.
\end{corollary}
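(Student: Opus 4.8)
The plan is to apply Lemma \ref{blowupsequence} with $n=m$ and then carry out an elementary asymptotic analysis in $m$. By that lemma,
$$\Delta(Bl_{C_m}(X)) = d^2\bigl((2-rmd)^2 - 4m^3 d^2\bigr) = d^2(2-rmd)^2 - 4m^3 d^4.$$
First I would observe that the first summand is a quadratic polynomial in $m$ with leading term $r^2 d^4 m^2$, hence is $O(m^2)$, whereas the second summand $-4m^3d^4$ is cubic in $m$ with strictly negative leading coefficient, using that $d>0$. Therefore $\Delta(Bl_{C_m}(X)) \to -\infty$ as $m\to\infty$.

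From here the conclusion is immediate: a sequence of integers tending to $-\infty$ attains any given value only finitely often, so $\{\Delta(Bl_{C_m}(X)) : m\geq 1\}$ has a maximum and is in particular bounded above. If one wants an explicit bound, one notes that there is an $m_0$ depending only on $r$ and $d$ with $d^2(2-rmd)^2 \leq 4m^3 d^4$ for all $m\geq m_0$ (comparing a quadratic with a cubic), so $\Delta(Bl_{C_m}(X))\leq 0$ for $m\geq m_0$, and the remaining finitely many values $m=1,\dots,m_0-1$ contribute only finitely many numbers; an upper bound is then the maximum of $0$ together with these.

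There is essentially no obstacle: the substantive content is entirely in Lemma \ref{blowupsequence}, and the positivity hypothesis $d>0$ recorded there is exactly what forces the cubic term to dominate with the correct sign (the hypothesis $r>0$ is not even needed for this particular statement, since the $m^3$ term controls the asymptotics regardless of the sign of $r$). The only point requiring any care is the bookkeeping that "tending to $-\infty$" plus "finitely many initial terms" gives "bounded above", which is routine.
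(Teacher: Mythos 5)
Your proposal is correct and follows exactly the paper's own argument: apply Lemma \ref{blowupsequence} with $n=m$ and observe that the cubic term $-4m^3d^4$ dominates, so $\Delta(Bl_{C_m}(X))\to-\infty$ and the set is bounded above. Your extra remarks (explicit $m_0$, and the observation that $r>0$ is not actually needed here) are fine but add nothing essential beyond the paper's one-line proof.
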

\begin{proof}
By Lemma \ref{blowupsequence} $$\Delta(Bl_{C_m}(X)) =   d^2 ((2-rmd)^2 - 4 m^3 d^2) . $$ The right hand side tends to $-\infty$ for fixed $r>0$, $d>0$ and $m \rightarrow \infty$.
\end{proof}

\section{Extremal contractions and $\mathbb{C}^{*}$-actions}
In this section we begin preparing to prove Theorem \ref{newmain}. The idea of the proof is to consider Mori Contractions of $X$,  in particular the possibilities for the image of the exceptional divisor, which will be invariant by some $\mathbb{C}^{*}$-action due to Theorem \ref{blan}.  Firstly, we recall a theorem of Mori, which describes the possible birational extremal contractions on a smooth $3$-fold. 

\begin{theorem} \cite{M} \label{main}
Let $X$ be a smooth $3$-fold such that $K_{X}$ is not nef. Then, $X$ admits an extremal contraction $F: X \rightarrow Y$, associated to an extremal ray $R \subset N_{1}(X,\mathbb{R}) \subset H_{2}(X,\mathbb{R})$. An irreducible curve $C \subset X$ is contracted to a point by $F$ $\iff$ $[C] \in R$.  When $\dim(X)=\dim(Y)$, i.e. $F$ is birational, then extremal contractions are of the $5$ following types:
\begin{enumerate}
\item Blow down to a smooth point.
\item Blow down to a smooth curve.
\item Blow down to a cyclic quotient singularity of type $\frac{1}{2}(1,1,1)$. $(E,N) = (\mathbb{P}^{2},\mathcal{O}(-2))$.
\item Blow down to a double (cDV) point. $(E,N) = (P,\mathcal{O}(-1)|_{P})$ where $P$ is a quadric cone in $\mathbb{CP}^3$.
\item Blow down to an ordinary double point singularity. $(E,N) = (Q,\mathcal{O}(-2)|_{Q})$.
\end{enumerate}
\end{theorem}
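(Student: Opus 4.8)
The plan is to deduce this from the two pillars of Mori theory for smooth threefolds: the Cone Theorem and the existence of extremal contractions. Since $K_X$ is not nef, the Cone Theorem produces a $K_X$-negative extremal ray $R \subset \overline{NE}(X)$, and the associated contraction $F : X \to Y$ onto a normal projective variety, with connected fibres and contracting precisely the curves of class in $R$, exists (it is constructed directly in \cite{M}; alternatively one can produce it via the base-point-free theorem applied to a supporting divisor of $R$). When $\dim Y < 3$ one is in the Mori-fibre-space case, which the statement does not discuss, so from here on assume $F$ is birational. The real content is then (i) that the exceptional locus is a prime divisor $E$, and (ii) the enumeration of the pairs $(E,N)$, $N := \mathcal{O}_X(E)|_E$, together with the induced singularity of $Y$.

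For (i) I would invoke the length estimate coming from bend-and-break: through every point of the exceptional locus passes a rational curve $\ell$ with $[\ell] \in R$ of low anticanonical degree, $0 < -K_X \cdot \ell \le 3$. A small (curve-like) exceptional locus is then excluded by the discrepancy computation along it, which must be at least $1$ by terminality of $Y$ and is pinned down by how much $-K_X$ is contracted; hence the exceptional locus is an irreducible divisor $E$, and the fibres of $E \to F(E)$ are swept out by the curves $\ell$.

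For the heart of (ii), restrict everything to $E$: by the projection formula $-K_X|_E$ and $-N$ are both $F$-ample, and adjunction gives $K_E = (K_X + E)|_E$. If $F(E)$ is a point, then $-K_E$ is ample; since in addition all curves of $E$ become numerically proportional in $X$, one gets that $E$ is $\mathbb{P}^2$, a smooth quadric surface, or the quadric cone, and the small list of allowed values of $-K_X \cdot \ell$ then determines $N$ in each case. A cohomology computation (Riemann--Roch on $E$, with the vanishing supplied by the negativity of $N$) identifies a neighbourhood of $F(E)$ in $Y$ with the cone over the corresponding projective embedding: a $\frac{1}{2}(1,1,1)$ quotient singularity for $(E,N) = (\mathbb{P}^2,\mathcal{O}(-2))$, a cDV double point in the quadric cases, and a smooth point for $(E,N)=(\mathbb{P}^2,\mathcal{O}(-1))$. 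If instead $F(E)$ is a curve $B$, then $E \to B$ is a conic bundle whose fibres are the $\ell$'s, smoothness of $X$ forces it to be a $\mathbb{P}^1$-bundle over a smooth $B$, and a Castelnuovo-type contractibility criterion identifies $X$ with $Bl_B(Y)$ for $Y$ smooth. Collecting the cases gives the list.

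The step I expect to be the real obstacle is the local analysis near $F(E)$ inside (ii): constraining $(E,N)$ numerically is comparatively soft, but showing that the contraction yields exactly the stated mild singularity --- nothing worse --- and that every numerically admissible pair is actually realised requires genuine input from the deformation theory of the extremal rational curves and essential use of the smoothness of $X$. This is of course Mori's theorem \cite{M}, and the above is only meant to convey the architecture of the proof.
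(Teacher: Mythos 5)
This theorem is not proved in the paper at all: it is quoted from Mori \cite{M}, so there is no internal argument to compare yours against beyond the citation. Your outline does follow the architecture of the standard proof (cone theorem, contraction of a $K_X$-negative extremal ray, divisoriality of the exceptional locus, then classification of the pairs $(E,N)$ and of the resulting singularity of $Y$), and you are explicit that the hard local analysis is being black-boxed, which is in effect what the paper does by citing \cite{M}.

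One step of your sketch is, however, wrong as stated: you rule out a small (one-dimensional) exceptional locus by appealing to a discrepancy bound coming from ``terminality of $Y$''. Terminality of $Y$ is not available at that point --- the nature of the singularities of $Y$ is part of the conclusion, and for a small contraction $K_Y$ would not even be $\mathbb{Q}$-Cartier, so discrepancies along the contracted locus are not defined. The correct route is Mori's deformation-theoretic study of the extremal rational curves (or, in later language, the Ionescu--Wi\'sniewski inequality $\dim E + \dim F \ge \dim X + \ell(R) - 1$, which for a birational extremal contraction of a smooth threefold forces the exceptional locus to be a divisor). With that repaired, the remainder of your outline --- adjunction and $F$-negativity of $N$ forcing $E$ to be $\mathbb{P}^2$, $\mathbb{P}^1 \times \mathbb{P}^1$ or the quadric cone when $F(E)$ is a point, and the $\mathbb{P}^1$-bundle/blow-up analysis when $F(E)$ is a curve --- is a fair summary of \cite{M}, though as written it remains a roadmap deferring the essential deformation-theoretic input to Mori rather than a self-contained argument.
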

Here $Q$ denotes a smooth quadric in $\mathbb{CP}^3$. If $\dim(X)>\dim(Y)$ then $X$ is a called a Mori fibre space, and in this case $Y$ is a smooth, projective variety \cite{M}. We recall that  a smooth variety with a $\mathbb{C}^{*}$-action with isolated fixed points is rational (see Lemma \ref{betti}) so $K_{X}$ cannot be nef. Hence $X$ either is a Mori fibre space or has an extremal contraction of the type $1-5$. In the following Lemma we rule out  type 5. 

\begin{lemma} \label{conres}
Suppose that $X$ is a smooth $3$-fold with a $\mathbb{C}^{*}$-action whose fixed point set consists of $6$ isolated points. Then $X$ cannot have an extremal contraction of type $5$.
\end{lemma}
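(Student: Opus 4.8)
The plan is to assume such a contraction exists and reach a contradiction by a fixed-point count, carried out first on the exceptional divisor and then on the singular image. So suppose $F : X \to Y$ is an extremal contraction of type $5$, with exceptional divisor $E \subset X$ a smooth quadric surface $Q \subset \mathbb{CP}^{3}$ and $F(E) = \{p\}$ an ordinary double point of $Y$. By Theorem~\ref{blan} there is a $\mathbb{C}^{*}$-action on $Y$, effective and with finite fixed-point set, for which $F$ is equivariant. Since $E$ is the exceptional locus of the equivariant morphism $F$, it is $\mathbb{C}^{*}$-invariant, and consequently $p = F(E)$ is a fixed point of the action on $Y$.

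First I would count the fixed points lying on $E$. As $E$ is invariant, the $\mathbb{C}^{*}$-action on $X$ restricts to $E$, and a point of $E$ is fixed by the restricted action precisely when it is one of the $6$ fixed points of $X$; in particular the fixed-point set of $\mathbb{C}^{*}$ on $E$ is finite. A smooth quadric surface is isomorphic to $\mathbb{CP}^{1}\times\mathbb{CP}^{1}$, so $\chi_{top}(E)=4$, and Theorem~\ref{topeu} applied to the restricted action gives that $\mathbb{C}^{*}$ has exactly $4$ fixed points on $E$. Hence exactly $6-4 = 2$ of the fixed points of $X$ lie off $E$.

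Finally I would transfer the count to $Y$. Since $F$ restricts to a $\mathbb{C}^{*}$-equivariant isomorphism $X\setminus E \xrightarrow{\ \sim\ } Y\setminus\{p\}$, the variety $Y$ has exactly $2$ fixed points different from $p$, hence $3$ fixed points in all. On the other hand $Y$ is irreducible and projective, and an ordinary double point is a normal (indeed hypersurface) singularity, so Theorem~\ref{numberfp} applies to $Y$ and forces at least $\dim(Y)+1 = 4$ fixed points; this is the desired contradiction. I do not expect a serious obstacle here: the only points needing a word of justification are that the hypotheses of Theorem~\ref{numberfp} genuinely hold for $Y$ (they do, the singularity being normal) and that the restricted action on $E$ has isolated fixed points (automatic from the finiteness of the fixed set of $X$). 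It is worth noting that this argument is special to type $5$: for the birational types whose exceptional divisor has $\chi_{top}=3$ the same bookkeeping would leave $Y$ with exactly $\dim(Y)+1$ fixed points, so no contradiction results and those cases must be excluded or analysed by other means.
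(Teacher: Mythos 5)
Your argument is correct and is essentially the paper's own proof: invariance of the contracted quadric, the Euler-characteristic count of $4$ fixed points on $E$ via Theorem~\ref{topeu}, hence $3$ fixed points on $Y$, contradicting the bound $\dim(Y)+1=4$ from Theorem~\ref{numberfp}. Your extra remarks (normality of the ordinary double point, isolatedness of the fixed points on $E$, and why the count fails to give a contradiction for types $3$ and $4$) are accurate but add nothing beyond the paper's reasoning.
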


\begin{proof}

Suppose for a contradiction that $X$ has such an extremal contraction $F : X \rightarrow Y$. By Theorem \ref{blan}, the quadric $Q = \mathbb{P}^1 \times \mathbb{P}^1$ contracted by $F$ is invariant by $\mathbb{C}^{*}$, hence $F(Q) \in Y$ is a fixed point. Since $\chi_{top}(\mathbb{P}^1 \times \mathbb{P}^1) =4$ the number of fixed points of the restriction of the action to $ \mathbb{P}^1 \times \mathbb{P}^1$ is $4$ by Theorem \ref{topeu}.

 Hence the action on $Y$ has exactly $3$ fixed points. Then since $\dim(Y) =3$ we get a contradiction with Theorem \ref{numberfp}. \end{proof}

\section{Analysis of the remaining extremal contractions}
In this section, we prove Theorem \ref{newmain}, we proceed by analysing the extremal contractions of type 1.-3. of Theorem \ref{main}.
\subsection{Blow-down to a smooth point}
Firstly we assume that are our variety $X $ has  $\mathbb{C}^{*}$-action with $6$ fixed points and is a blow-up of a smooth $3$-fold $Y$ in a point. We note there are $4$ possibilities for $Y$. Let $Q$ be a smooth quadric $3$-fold.  
\begin{lemma} \label{pointb}
Suppose that $X$ is a smooth $3$-fold with a $\mathbb{C}^{*}$-action with $6$ fixed points. Suppose furthermore that $X$ is the blow-up of a smooth $3$-fold $Y$ in a point, then $X$ is AC-equivalent to the blow-up of $\mathbb{CP}^{3}$, $Q$, $V_{5}$ or $V_{22}$ in a point.
\end{lemma}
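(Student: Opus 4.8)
The plan is to reduce to a classification problem: if $X = Bl_p(Y)$ is a smooth $3$-fold blow-up of a smooth $3$-fold $Y$ at a point, then blowing up introduces an exceptional divisor $E \cong \mathbb{CP}^2$, so $\chi_{top}(X) = \chi_{top}(Y) + \chi_{top}(\mathbb{CP}^2) - \chi_{top}(\mathrm{pt}) = \chi_{top}(Y) + 2$. By Theorem \ref{topeu} the hypothesis that $X$ has $6$ fixed points gives $\chi_{top}(X) = 6$, hence $\chi_{top}(Y) = 4$. By Blanchard's theorem (Theorem \ref{blan}), the blow-down $X \to Y$ is equivariant for a $\mathbb{C}^*$-action on $Y$, and since a divisorial contraction induces an effective action, $Y$ carries an effective $\mathbb{C}^*$-action; moreover the fixed point set of this action on $Y$ is finite (the preimage of each fixed point of $Y$ contains a fixed point of $X$), so by Theorem \ref{topeu} again the action on $Y$ has exactly $\chi_{top}(Y) = 4$ fixed points.

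Next I would invoke the known classification of smooth projective $3$-folds admitting a $\mathbb{C}^*$-action with $4$ fixed points: by \cite{KPS,CPS,To2} (cited in the introduction), these fall into exactly $4$ topological types, namely those smooth Fano $3$-folds with the Betti numbers of $\mathbb{CP}^3$ — concretely $\mathbb{CP}^3$, the quadric $Q$, the quintic del Pezzo $3$-fold $V_5$, and the Mukai–Umemura type $3$-fold $V_{22}$. One must be slightly careful: the cited classification is up to AC-equivalence (or the relevant topological/almost-complex equivalence), not up to biregular isomorphism, so $Y$ is only known to be AC-equivalent to one of these four. Since $b_2(Y) = 1$ in each case, $Y$ has a minimal $\mathbb{C}^*$-action and we are in the situation covered by that literature.

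The remaining point is that blowing up a point is compatible with AC-equivalence: if $Y_1$ and $Y_2$ are AC-equivalent almost complex $6$-manifolds, then $Bl_{p_1}(Y_1)$ and $Bl_{p_2}(Y_2)$ are AC-equivalent, independent of the choice of points, because the blow-up at a point is a local operation (connected sum with $\overline{\mathbb{CP}}^3$ in the smooth category, with a standard effect on $c_1$) and $6$-manifolds are simply connected here so any two points are equivalent up to diffeomorphism. Combining, $X = Bl_p(Y)$ is AC-equivalent to $Bl_{p'}$ of one of $\mathbb{CP}^3$, $Q$, $V_5$, $V_{22}$, which is the claim.

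The main obstacle I expect is the bookkeeping around \emph{which} equivalence the $4$-fixed-point classification is actually stated up to, and verifying that AC-equivalence (rather than mere homeomorphism or diffeomorphism) is preserved under point blow-up — i.e. tracking the first Chern class through the blow-up and through the diffeomorphism furnishing the equivalence of the $Y_i$. One should check that the diffeomorphism can be chosen to match a neighbourhood of the blown-up points (or appeal to the fact that $Bl_p$ of a simply connected $6$-manifold is well-defined up to AC-equivalence regardless of $p$), so that it extends across the exceptional divisors. Everything else — the Euler characteristic count, the descent of the action via Blanchard, and the appeal to Theorem \ref{numberfp} to know $Y$ genuinely has a finite nonempty fixed set — is routine given the results already assembled in the preliminaries.
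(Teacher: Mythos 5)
Your proposal is correct and follows essentially the same route as the paper: descend the action to $Y$ via Blanchard's theorem, identify $Y$ with one of the four families $\mathbb{CP}^{3}$, $Q$, $V_{5}$, $V_{22}$ using the known classification, and conclude that the point blow-up is determined up to AC-equivalence. The only differences are cosmetic: you pin down $Y$ by the Euler-characteristic count ($\chi_{top}(Y)=4$, hence the minimal-fixed-point classification applies), whereas the paper uses $b_{2}(Y)=1$ together with rationality to see $Y$ is Fano and then quotes the classification of Picard-rank-one Fano $3$-folds with a $\mathbb{C}^{*}$-action; and you make explicit the (correct, and in the paper implicit) point that AC-equivalence of the $Y$'s, via deformation equivalence within each family, is inherited by their point blow-ups.
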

\begin{proof}
By Theorem \ref{blan} $Y$ has a $\mathbb{C}^{*}$-action with isolated fixed points and also $b_{2}(Y)=1$.  Hence, $Y$ is a smooth Fano $3$-fold. Such Fano $3$-folds fall into $4$ deformation families, (see  \cite[Theorem 1.1]{To2}, \cite[Theorem 1.1.2]{KPS}), namely $\mathbb{CP}^{3}$, $Q$, $V_{5}$ or $V_{22}$, in particular four AC-equivalence classes. The result follows. 
\end{proof}

We recall the definitions of $V_{5}$ and $V_{22}$ for convenience. $V_{5}$ is the intersection of the Grassmanian $Gr(2,5) \subset \mathbb{CP}^{9}$ embedded by the Plucker embedding and a codimension $3$ linear subspace. $V_{22}$ is the Mukai-Umemura Fano $3$-fold.

\subsection{ Blow-Down to a smooth curve}
In this section we collect results needed to prove Theorem \ref{newmain}. We analyze the case when the $3$-fold has a Mori extremal contraction which is a blow-down to a smooth curve, $\phi : X \rightarrow Y$. Similarly to the previous section, since $Y$ is Fano with Picard rank $1$ and has a $\mathbb{C}^{*}$-action  there are $4$ possibilities for $Y$ up to AC-equivalence, namely $\mathbb{CP}^{3},Q,V_{5}$ and $ V_{22}$ which we deal with in turn.

We recall that on a given smooth $3$-fold  $Y$ and $C \subset Y$ a smooth curve, $Bl_{C}(Y)$ is determined up to AC-equivalence by the triple $(g_{C},[C],c_{1}(N))$, where $g_{C}$ is the genus of $C$, $[C] \in H_{2}(Y,\mathbb{Z})$ is the homology class of $C$, and $N$ is the normal bundle of $C$ in $Y$. We start by giving an example:

\begin{example} \label{cptc}
Let $n >0$ be an integer. Consider the $\mathbb{C}^{*}$-action on $\mathbb{CP}^{3}$ given by $z.[x_{0}:x_{1}:x_{2}:x_{3}] = [x_{0}: zx_{1} : z^{n-1}x_{2} : z^{n} x_{3}]$. Then there is a the smooth $\mathbb{C}^{*}$-invariant curve $C_{n}$ given parametrically by $[X^{n}: X^{n-1}Y : X Y^{n-1} : Y^{n}]$ for $[X:Y] \in \mathbb{CP}^{1}$, having degree $n$. The induced action on $Bl_{C_{n}}(\mathbb{CP}^{3})$ has isolated fixed points.

\end{example}
We make the following remark about higher rank torus actions on these $3$-folds.

\begin{remark} \label{twoone}
One may check that $C_{n} \subset \mathbb{CP}^{3}$ is invariant by a $\mathbb{C}^{*} \times \mathbb{C}^{*}$-action $\iff$ $n=1$. Hence by Theorem \ref{blan}, the only $3$-fold appearing in Example \ref{cptc} having a $\mathbb{C}^{*} \times \mathbb{C}^{*}$-action is $Bl_{L}(\mathbb{CP}^{3})$ where $L$ is a line.
\end{remark}

\begin{lemma} \label{projp}
Suppose that $C \subset \mathbb{CP}^{3}$ is a curve invariant by a $\mathbb{C}^{*}$-action such that the induced action on $Bl_{C}(\mathbb{CP}^{3})$ has isolated fixed points. Then $Bl_{C}(\mathbb{CP}^{3})$ is AC-equivalent to $Bl_{C_{n}}(\mathbb{CP}^{3})$ for some $n > 0$, where $C_{n}$ is defined in Example \ref{cptc}.
\end{lemma}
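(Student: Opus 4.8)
\textbf{Proof plan for Lemma \ref{projp}.}

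The plan is to determine all $\mathbb{C}^*$-invariant smooth curves $C \subset \mathbb{CP}^3$ for which the induced action on $Bl_C(\mathbb{CP}^3)$ has only isolated fixed points, and to identify the AC-equivalence class of the blow-up with one of the $Bl_{C_n}(\mathbb{CP}^3)$ from Example \ref{cptc}. The key observation is that the fixed-point set of a linear $\mathbb{C}^*$-action on $\mathbb{CP}^3$ is a union of linear subspaces $\mathbb{P}(V_\lambda)$, one for each weight $\lambda$ appearing in the linearized action on $\mathbb{C}^4$. For the action on $Bl_C(\mathbb{CP}^3)$ to have isolated fixed points, we need first that the action on $\mathbb{CP}^3$ itself has isolated fixed points away from $C$ — in fact, the fixed locus of the blow-up maps onto the fixed locus of $\mathbb{CP}^3$, and positive-dimensional fixed components can only be ``resolved'' by the blow-up if they lie inside $C$. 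Since $C$ is a curve, at most one $\mathbb{P}(V_\lambda)$ can be positive-dimensional, and if so it must be a $\mathbb{P}^1$ contained in $C$ (so $C$ equals that $\mathbb{P}^1$, i.e. a line) — but one checks a line gives $C_1$ up to the action. Otherwise all weights on $\mathbb{C}^4$ are distinct, so $\mathbb{CP}^3$ has exactly $4$ isolated fixed points $p_0,\dots,p_3$, and we may choose coordinates so that the weights are $w_0 < w_1 < w_2 < w_3$ with $p_i = [e_i]$.

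Next I would pin down which invariant curves $C$ occur. Since $C$ is $\mathbb{C}^*$-invariant and irreducible, it is the closure of a single orbit together with its two limit points by Lemma \ref{twopoint}; these limit points are among the $p_i$, say $C$ joins $p_a$ to $p_b$ with $a < b$. The generic orbit in $C$ then sweeps out a curve whose parametrization in the chart is $t \mapsto [\dots]$ with monomials of the form $t^{w_j - w_a}$ in the coordinates where $V_\lambda \ne 0$; smoothness of $C$ at $p_a$ and $p_b$ forces exactly one further coordinate direction to be ``active'' at each endpoint with the minimal possible exponent gap. Concretely, writing $C$ as the image of $\mathbb{CP}^1$, smoothness at both ends combined with invariance forces $C$ to be (up to relabeling) the curve $[X^n : X^{n-1}Y : XY^{n-1} : Y^n]$ type curve — more precisely, the exponents must be of the form $\{0, d_1, n - d_2, n\}$ realizing a smooth rational normal-type curve, and the only constraint from smoothness is on the two extreme gaps, leaving $\deg C = n$ as the only invariant. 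The degree $n$ together with $g_C = 0$ and the normal bundle $N = \mathcal{O}(a) \oplus \mathcal{O}(b)$ with $a + b = 4n - 2$ (from the adjunction/Euler sequence computation, which is forced once $n$ is fixed since $C$ is a smooth rational curve of degree $n$ in $\mathbb{CP}^3$ and such curves have $N \cong \mathcal{O}(2n-1)^{\oplus 2}$ when nondegenerate, or one records the splitting type) — so the triple $(g_C, [C], c_1(N)) = (0, n h^2, 4n-2)$ depends only on $n$, hence determines $Bl_C(\mathbb{CP}^3)$ up to AC-equivalence by the criterion recalled before Example \ref{cptc}.

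The main obstacle, I expect, is the bookkeeping in the smoothness analysis: showing rigorously that a $\mathbb{C}^*$-invariant smooth rational curve in $\mathbb{CP}^3$ joining two fixed points, with the extra condition that the blow-up has isolated fixed points, is forced to have the ``staircase'' weight pattern and in particular is nondegenerate with $c_1(N)$ determined by $\deg C$. One has to rule out, for instance, invariant conics or twisted cubics sitting in a plane in a way that would leave a fixed $\mathbb{P}^1$ or fixed point on the exceptional divisor; the cleanest way is probably to compute the $\mathbb{C}^*$-weights on the exceptional divisor $\mathbb{P}(N_C)$ directly and demand that the two fixed points of the $\mathbb{C}^*$-action on each fiber $\mathbb{P}^1$ over $p_a, p_b$ be isolated in $Bl_C(\mathbb{CP}^3)$, which translates into an inequality on the weight gaps that is satisfied exactly by the curves $C_n$. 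Once the triple is shown to depend only on $n$, the AC-equivalence with $Bl_{C_n}(\mathbb{CP}^3)$ is immediate from Example \ref{cptc} producing such a curve for each $n > 0$.
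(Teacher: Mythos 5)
Your proposal ultimately lands on the same argument as the paper, but it buries it under a lot of machinery that is not needed and is partly wrong. The paper's proof is exactly your closing step and nothing more: by Theorem \ref{blan} the action on $\mathbb{CP}^{3}$ has finitely many fixed points, so the induced action on the smooth invariant curve $C$ is non-trivial, forcing $C\cong\mathbb{CP}^{1}$, i.e.\ $g_{C}=0$; if $[C]=nH^{2}$ then adjunction (restricting $T\mathbb{CP}^{3}$ to $C$) gives $c_{1}(N_{C})=4n-2$; and since $Bl_{C}(Y)$ is determined up to AC-equivalence by the triple $(g_{C},[C],c_{1}(N_{C}))$, which equals $(0,n,4n-2)$ and is realized by $C_{n}$, the lemma follows. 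In particular the ``main obstacle'' you identify --- proving a staircase weight pattern, nondegeneracy, or a specific splitting type of $N_{C}$ --- is a non-issue: $c_{1}(N_{C})$ is pinned down by the degree alone, with no need to know the curve explicitly. Two of your intermediate assertions are also incorrect, though harmless to the final conclusion: (i) a pointwise-fixed line cannot occur under the hypothesis, because over a pointwise-fixed curve every fibre of the exceptional divisor $\mathbb{P}(N_{C})\to C$ contains a fixed point, so the blow-up would have a positive-dimensional fixed locus (so this case is excluded, rather than ``giving $C_{1}$''); (ii) smooth rational curves of degree $n$ in $\mathbb{CP}^{3}$ need not have balanced normal bundle $\mathcal{O}(2n-1)^{\oplus 2}$, and invariant smooth curves need not be projectively equivalent to the staircase curves (e.g.\ plane conics); only $a+b=4n-2$ is true and only that is used. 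If you strip your proposal down to its second-paragraph conclusion (genus $0$ from the orbit-closure/non-trivial action argument, degree $n$, $c_{1}(N_{C})=4n-2$, then the AC-criterion), you have precisely the paper's proof.
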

\begin{proof}
Let $\phi : X \rightarrow \mathbb{CP}^{3}$ be the inverse of the curve blow-up. We recall from Theorem \ref{blan} that $\phi$ is $\mathbb{C}^{*}$-equivariant with respect to some $\mathbb{C}^{*}$-action on $\mathbb{CP}^{3}$ which must have isolated fixed points. Hence, the blow-up curve $C$ is $\mathbb{C}^{*}$-invariant and since the action on $\mathbb{CP}^{3}$ has isolated fixed points the induced action on $C$ is non-trivial. The only smooth curve with a nontrivial $\mathbb{C}^{*}$-action is $\mathbb{CP}^{1}$ i.e the genus of the curve $g_{C}$ must be $0$. If  $[C] = k H^2\in H_{2}( \mathbb{CP}^{3},\mathbb{Z}) \cong \mathbb{Z}$, then note that $c_{1}(N_{C}) + 2 =  4k$, where $N_{C}$ is the normal bundle of $C$. So all the possibilities for $(g_{C},[C],c_{1}(N_{C})) \in \mathbb{Z}_{\geq 0} \times \mathbb{Z}^{2}$ are exhausted by $C_{n} \subset \mathbb{CP}^{3}$ defined in Example \ref{cptc}. These invariants determine the curve blow-up, up to AC-equivalence.
\end{proof}

The case where $Y$ is a smooth quadric $3$-fold $Q$ is very similar to the above case $Y = \mathbb{CP}^{3}$. We start by giving a sequence of examples.

\begin{example} \label{quadc}
Let $n>0$ be an integer. Consider the quadric $Q =  \{x_{2}^2 - x_{0}x_{4} - 2x_{1}x_{3} = 0\} \subset \mathbb{CP}^{4}$, which is smooth \footnote{This equation of the quadric, convenient for studying torus actions, was found in the answer to the following MO question https://mathoverflow.net/questions/306234/mathbbc-actions-on-fano-3-folds.}. Consider the $\mathbb{C}^{*}$-action on $\mathbb{CP}^{4}$ preserving $Q$ given by $$z. [x_{0}:x_{1}:x_{2}:x_{3}:x_{4}] = [x_{0}: zx_{1}: z^{n+1}x_{2}: z^{2n+1} x_{3} : z^{2n+2} x_{4}] .$$ 
Then the smooth curve $C'_{n} \subset Q$ given parametrically by $[X^{2n+2}: X^{2n+1}Y: X^{n+1}Y^{n+1} : X Y^{2n+1} : Y^{2n+2}]$ for some integer $n \geq 0$ is invariant by the action. The induced action on $Bl_{C'_{n}}(Q)$ has isolated fixed points.

\end{example}

We also make the following remark about higher rank torus actions on these $3$-folds;
\begin{remark} \label{twotwo}
We note $Bl_{L}(Q)$ has a $\mathbb{C}^{*} \times \mathbb{C}^{*}$-action with isolated fixed points,  where $C'_{1}=L$ is an invariant line. On the other hand for $n \geq 2$, $Bl_{C'_{n}}(Q)$ does not have $\mathbb{C}^{*} \times \mathbb{C}^{*}$-action. This follows from the fact that there are only $6$ $\mathbb{C}^{*} \times \mathbb{C}^{*}$-invariant curves in $Q$, all of them lines; and any action on $Bl_{C'_{n}}(Q)$ is inherited from and action on $Q$ preserving $C'_{n}$ by Theorem \ref{blan}.
\end{remark}

\begin{lemma} \label{quadp}
Let $Q$ be a smooth quadric $3$-fold. Suppose that $C \subset Q$ is a smooth curve invariant by a $\mathbb{C}^{*}$-action on $Q$, such that the induced action on $Bl_{C}(Q)$ has isolated fixed points. Then $Bl_{C}(Q)$ is AC-equivalent to $Bl_{C'_{n}}(Q)$ for some $n > 0$, where $C'_n$ is defined in Example \ref{quadc}.
\end{lemma}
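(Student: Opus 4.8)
The plan is to mimic the proof of Lemma \ref{projp} almost verbatim, replacing $\mathbb{CP}^3$ by $Q$ and keeping track of the modified numerical constraints. First I would invoke Theorem \ref{blan}: the inverse of the blow-down $\phi : Bl_C(Q) \to Q$ is $\mathbb{C}^*$-equivariant for some $\mathbb{C}^*$-action on $Q$, which must itself have isolated fixed points (since a preimage of a fixed point contains a fixed point, and the generic fibre over $C$ contributes). Consequently the blow-up locus $C$ is $\mathbb{C}^*$-invariant, and the induced action on $C$ is non-trivial — otherwise the whole exceptional divisor $\mathbb{P}(N_C) \cong C \times \mathbb{P}^1$ (or a $\mathbb{P}^1$-bundle over $C$) would be pointwise fixed or fixed-with-positive-dimensional fixed set, contradicting isolatedness. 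Hence $C \cong \mathbb{CP}^1$, i.e. $g_C = 0$.

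Next I would pin down the remaining discrete invariants $([C], c_1(N_C))$. Writing $H_2(Q,\mathbb{Z}) \cong \mathbb{Z}$ generated by the class $\ell$ of a line, set $[C] = n\ell$ so that $C$ has degree $n$; here I should note $n > 0$ since $C$ is an effective curve not contracted by anything. The adjunction/normal-bundle identity on $Q$ gives $\int_C c_1(TQ) = \int_C c_1(TQ|_C) = 2 + c_1(N_C)$, and since $-K_Q = \mathcal{O}_Q(2)$ in the generator of $\mathrm{Pic}(Q)$ (the quadric is Fano of index $2$), $\int_C c_1(TQ) = 2 \deg C$ — wait, more precisely $\int_C c_1(TQ) = 2n \cdot (\text{degree of a line w.r.t. } -K_Q/2)$; being careful, $-K_Q$ has degree $3$ on a line in the usual hyperplane normalization but index $2$ means $-K_Q = 2h$ with $h\cdot\ell = ?$. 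The cleanest route: a line $\ell \subset Q$ has $-K_Q \cdot \ell = 2$, so $\int_C c_1(TQ) = 2n$, hence $c_1(N_C) = 2n - 2$. Thus for each $n > 0$ the triple $(g_C, [C], c_1(N_C)) = (0, n\ell, 2n-2)$ is completely determined. I would then check that this triple is exactly realized by the curve $C'_n$ of Example \ref{quadc}: $C'_n \cong \mathbb{CP}^1$ by its rational parametrization, it has degree $n$ (from the parametrization, or from the bidegree computation), and its normal bundle has the forced first Chern class $2n-2$. Since $Bl_C(Y)$ for a smooth curve $C$ in a smooth $3$-fold depends up to AC-equivalence only on $(g_C, [C], c_1(N_C))$ (as recalled before Example \ref{cptc}), this yields $Bl_C(Q) \sim_{AC} Bl_{C'_n}(Q)$.

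The main obstacle I anticipate is purely bookkeeping: getting the index/degree normalization on the quadric exactly right so that the constraint on $c_1(N_C)$ matches the degree-$n$ curve $C'_n$ constructed in Example \ref{quadc}, and confirming that $C'_n$ genuinely has the asserted degree and normal bundle (this is where one actually uses the explicit equation of $Q$ and the weighted action). A secondary point requiring a line of justification is the non-triviality of the induced action on $C$: one must argue that a pointwise-fixed smooth curve in $Q$ would force the $\mathbb{C}^*$-action on $Bl_C(Q)$ to have a positive-dimensional fixed locus (the exceptional divisor being a $\mathbb{P}^1$-bundle over a fixed curve), contradicting the hypothesis — this is the same argument as in Lemma \ref{projp} and needs only to be cited. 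Everything else is a direct transcription of the $\mathbb{CP}^3$ case.
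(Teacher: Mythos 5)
Your proposal follows the paper's proof of this lemma essentially verbatim: equivariance of the blow-down via Theorem \ref{blan}, non-triviality of the induced action on $C$ forcing $g_C=0$, reduction to the discrete triple $(g_C,[C],c_1(N_C))$, and matching with Example \ref{quadc}. The one concrete error is the numerology you yourself flagged as the ``bookkeeping'' risk: the smooth quadric $3$-fold has Fano index $3$, not $2$. By adjunction $K_Q=(K_{\mathbb{P}^4}+Q)|_Q=(-5+2)H=-3H$, so a line $\ell\subset Q$ has $-K_Q\cdot\ell=3$, and for a degree-$k$ rational curve the constraint is $3k=2+c_1(N_C)$ (this is exactly the relation used in the paper), i.e.\ $c_1(N_C)=3k-2$, not $2k-2$. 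This slip does not damage the structure of the argument, since all that is really needed is that $c_1(N_C)$ is determined by the class $[C]$ once $g_C=0$, so the blow-up is determined up to AC-equivalence by the degree alone; but your proposed verification that $C'_n$ has normal bundle of degree $2n-2$ would fail as stated and must be redone with the corrected index (and with the degree of the explicitly parametrized $C'_n$ from Example \ref{quadc} computed honestly from the parametrization). With those constants fixed, your argument coincides with the paper's.
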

\begin{proof}

Let $\phi : X \rightarrow Q$ be the inverse of the curve blow-up. We recall from Theorem \ref{blan} that $\phi$ is $\mathbb{C}^{*}$-equivariant with respect to some $\mathbb{C}^{*}$-action on $Q$ which must have isolated fixed points. Hence, the blow-up curve $C$ is $\mathbb{C}^{*}$-invariant and since the action on $Q$ has isolated fixed points the induced action on $C$ is non-trivial.  The only smooth curve with a nontrivial $\mathbb{C}^{*}$ is $\mathbb{CP}^{1}$ i.e the genus of the curve $g_{C}$ must be $0$. Let  $[C] = k \alpha \in H_{2}(Q,\mathbb{Z}) \cong \mathbb{Z}$, where $\alpha$ is the positive generator then note that $3k=2 + c_{1}(N_{C})$ where $N_C$ is the normal bundle of $C$ in $Q$.   So all the possibilities for $(g_{C},[C],c_{1}(N_{C})) \in \mathbb{Z}_{\geq 0} \times \mathbb{Z}^{2}$ are exhausted by $C'_{n} \subset Q$, defined in Example \ref{quadc}. These invariant determine the curve blow-up, up to AC-equivalence.
\end{proof}

The remaining two cases may be dealt with by a Theorem of Tolman \cite[Theorem 2]{To2}.

\begin{lemma} \label{ex}
There is finitely many AC-equivalence classes of $3$-folds obtained as blow ups of $3$-folds in the families $V_{5}$ and $V_{22}$ in a smooth $\mathbb{C}^{*}$-invariant curve.
\end{lemma}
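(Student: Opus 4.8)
The plan is to reduce the finiteness claim to a bound on the degree of a $\mathbb{C}^{*}$-invariant rational curve, and then to read such a bound off Tolman's classification. Recall, as noted before Example \ref{cptc}, that for a fixed smooth $3$-fold $Y$ and a smooth curve $C\subset Y$ the blow-up $Bl_{C}(Y)$ is determined up to AC-equivalence by the triple $(g_{C},[C],c_{1}(N))$, where $N$ is the normal bundle of $C$ in $Y$. We may restrict attention to the case where the $\mathbb{C}^{*}$-action on $Bl_{C}(Y)$ has isolated fixed points, since any $\mathbb{C}^{*}$-invariant curve on which the action is trivial is a component of a fixed locus and there are only finitely many of those. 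Then, by Theorem \ref{blan}, the action descends to a $\mathbb{C}^{*}$-action on $Y$ which again has isolated fixed points and under which $C$ is invariant; the induced action on $C$ is non-trivial, so $C\cong\mathbb{CP}^{1}$ and $g_{C}=0$. Since $Y$ lies in the family $V_{5}$ or $V_{22}$, $\mathrm{Pic}(Y)\cong\mathbb{Z}$ is generated by an ample class $H$, so $[C]$ is determined by the integer $d=H\cdot C>0$; and by adjunction for $C\cong\mathbb{CP}^{1}\subset Y$ we have $c_{1}(N)=-2-K_{Y}\cdot C$, which is in turn determined by $[C]$ because $-K_{Y}$ is a positive multiple of $H$. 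Hence the AC-equivalence class of $Bl_{C}(Y)$ is determined by $d$ alone, and it suffices to bound $d$ over all smooth $\mathbb{C}^{*}$-invariant rational curves in members of the two families.

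Since $\chi_{top}(V_{5})=\chi_{top}(V_{22})=4$, Theorem \ref{topeu} shows the induced action on $Y$ has exactly $4$ fixed points. I would then invoke Tolman's classification \cite[Theorem 2]{To2} of $\mathbb{C}^{*}$-actions with isolated fixed points on $V_{5}$ and $V_{22}$: up to conjugacy there are finitely many such actions, and the weights of the isotropy representations at the four fixed points are explicitly determined. Given such an action, an invariant rational curve $C$ is the closure of a single $1$-dimensional orbit and meets the fixed locus in exactly two points $p_{+},p_{-}$ by Lemma \ref{twopoint}, at which it is tangent to weight lines of opposite sign; a balancing/localization computation with a $\mathbb{C}^{*}$-linearization of $H$ then expresses $d=H\cdot C$ in terms of these tangent weights and the weights of the linearization at $p_{\pm}$. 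Since all of this data takes only finitely many values over the finitely many actions, $d$ is bounded, which proves the lemma.

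The main obstacle is the final step: converting the explicit fixed-point data of \cite{To2} into a genuine upper bound for $H\cdot C$, i.e.\ ruling out $\mathbb{C}^{*}$-invariant curves of arbitrarily large degree joining a fixed pair of fixed points. I expect this to follow from the weight tables together with the fact that the orbits emanating from a fixed point are constrained by the linearized action, but making this uniform across both deformation families $V_{5}$ and $V_{22}$ is where the real work lies.
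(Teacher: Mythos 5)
Your reduction (genus $0$, $[C]$ determined by $d=H\cdot C$, $c_{1}(N)$ determined by adjunction, so finiteness follows from a degree bound) matches the paper's strategy, and like the paper you want the bound to come from Tolman's theorem plus localization. But the crux --- an actual upper bound on the degree of a $\mathbb{C}^{*}$-invariant curve in $V_{5}$ or $V_{22}$ --- is exactly the step you leave open in your final paragraph, and your proposed route to it is shakier than necessary: you want to extract it from the finiteness of the classification and the weight tables at the four fixed points, and you correctly worry that this does not by itself rule out invariant curves of large degree joining a fixed pair of fixed points, nor is it obviously uniform over both deformation families. As it stands this is a genuine gap: "the data takes finitely many values, hence $d$ is bounded" is not a proof, because the tangent weight and the degree enter the localization identity together, and you have not pinned down the quantity that actually bounds their product.

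The paper closes this gap with a single quantitative input rather than a case analysis. Normalise the invariant K\"ahler form so that $[\omega]=-K_{Y}$ and let $H$ be the Hamiltonian of the circle action; Tolman's theorem \cite[Theorem 2]{To2} then gives $H(Y)=[-6,6]$. For an invariant curve $C$, the ABBV localization formula \cite{BV} applied to $C$ (whose closure meets the fixed locus in two points $p_{\pm}$ by Lemma \ref{twopoint}) yields $-K_{Y}\cdot C=\int_{C}\omega=\bigl(H(p_{+})-H(p_{-})\bigr)/w$ with $w\geq 1$ the tangent weight, hence $1\leq -K_{Y}\cdot C\leq 12$ uniformly, with no reference to which of the finitely many actions one has or to which family $Y$ belongs. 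Writing $-K_{Y}=k\alpha$ ($k=2$ for $V_{5}$, $k=1$ for $V_{22}$) this bounds $[C]$, and $c_{1}(N)=-K_{Y}\cdot C-2\in[-1,10]$ bounds the normal bundle, which is the finiteness you want. So your plan is salvageable, but the missing ingredient is precisely the bound $H(Y)=[-6,6]$ on the moment image (equivalently on $-K_{Y}\cdot C$), not the weight tables; supplying it makes the "uniformity across $V_{5}$ and $V_{22}$" issue disappear.
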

\begin{proof}
Let $X$ be a $3$-fold in the family $V_{5} $ or $ V_{22}$ and let $H$ be the corresponding Hamiltonian, where the symplectic form is normalised so that $[\omega] = -K_{X}$. By Tolman's theorem \cite[Theorem 2]{To2}, it follows that  $H(X) = [-6,6]$. It follows that $1 \leq -K_{X}.C \leq 12$ for any invariant curve $C$ in $V_{5}$ or $V_{22}$ by the ABBV localisation formula \cite{BV}. Let $\alpha$ be the generator of $H^{2}(X,\mathbb{Z}) = \mathbb{Z}$ with $\alpha^{3}>0$, then $-K_{X} = k\alpha$, where $k=2$ if $X$ is in the family $V_{5}$ and $k=1$ if $X$ is in the family $V_{22}$. Since $-K_{X}.C \leq 12$ this implies there are finitely many possibilities for the homology class of $C$, and since $-K_{X}.C = 2 + c_{1}(N)$, where $N$ is the normal bundle of $C$ in $X$, $-1 \leq c_{1}(N) \leq 10$, so there are finitely many possibilities for the normal bundle up to topological equivalence. Hence, there are finitely many possibilities for the blow-up, up to AC-equivalence.
\end{proof}

We note that bluntly applying the above proof one can obtain that there are at most $12$ AC-equivalence classes of curve blow-ups of $V_{5}$ or $V_{22}$ in $\mathbb{C}^{*}$-invariant smooth curves. A finer analysis is possible using results of \cite{KPS}. We also note that these blow-ups will often coincide with the $3$-folds from Example \ref{cptc} and Example \ref{quadc}. For example the blow up of $V_{5}$ in a line is AC-equivalent to $Bl_{C'_{3}}(Q)$, and the blow-up of $V_{5}$ in a quadric is AC-equivalent to $Bl_{C_{4}}(\mathbb{CP}^{3})$, etc.

To conclude the section, we put together the previous lemmas to get complete analysis of the case of smooth curve blow downs.

\begin{proposition} \label{curveb}
Suppose that $X$ is a smooth $3$-fold with a $\mathbb{C}^{*}$-action with $6$ fixed points. Suppose that $X$ has a Mori extremal contraction $\phi:X \rightarrow Y$, which is the blow-up of a smooth curve in $Y$. Then either X is AC-equivalent to one of the $3$-folds from Example \ref{cptc} or Example \ref{quadc}, or $X$ is $AC$ equivalent to one of a finite number  of (at most $24$) exceptional cases.
\end{proposition}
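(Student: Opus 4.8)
The plan is to apply Theorem \ref{blan} to the extremal contraction $\phi : X \to Y$, which equips $Y$ with a $\mathbb{C}^*$-action having isolated fixed points; since $\dim(Y) = 3$, $b_2(Y) = 1$ and $K_Y$ is not nef, $Y$ is a smooth Fano threefold of Picard rank $1$ carrying a $\mathbb{C}^*$-action with isolated fixed points, and by the classification cited in Lemma \ref{pointb} (from \cite{To2,KPS}) there are exactly four such deformation families: $\mathbb{CP}^3$, $Q$, $V_5$ and $V_{22}$, giving four AC-equivalence classes for $Y$. So the argument is just a case split on which family $Y$ belongs to, invoking the relevant lemma in each case.

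First I would treat $Y = \mathbb{CP}^3$. By Theorem \ref{blan} the blow-up curve $C$ is invariant under a $\mathbb{C}^*$-action on $\mathbb{CP}^3$ with isolated fixed points, so $C$ is a smooth $\mathbb{C}^*$-invariant curve, and Lemma \ref{projp} applies verbatim to conclude that $Bl_C(\mathbb{CP}^3)$ is AC-equivalent to $Bl_{C_n}(\mathbb{CP}^3)$ for some $n > 0$, i.e. one of the threefolds of Example \ref{cptc}. Next I would treat $Y = Q$ in exactly the same way, using Lemma \ref{quadp} to get AC-equivalence to $Bl_{C'_n}(Q)$ for some $n > 0$, one of the threefolds of Example \ref{quadc}. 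Finally, for $Y$ in the family $V_5$ or $V_{22}$, Lemma \ref{ex} gives finitely many AC-equivalence classes — at most $24$ in total (at most $12$ for each family, as noted after the proof of Lemma \ref{ex}). Collecting the three cases yields the claimed dichotomy.

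There is one gap to patch: the statement of Proposition \ref{curveb} needs the $\mathbb{C}^*$-action on $X$ to have exactly $6$ fixed points, but the lemmas being cited (Lemmas \ref{projp}, \ref{quadp}, \ref{ex}) only use that the action on $Bl_C(Y)$ has isolated fixed points, which is weaker and is automatic here; so no extra care is needed — the hypotheses of each lemma are satisfied. The only genuinely substantive ingredient is the Fano classification input underlying the identification of $Y$, which is already packaged in Lemma \ref{pointb} and can be quoted directly. The numerical bound $24$ is simply $12 + 12$ from the two remaining families.

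The main obstacle is essentially bookkeeping rather than mathematics: one must be careful that "AC-equivalence of $Y$" propagates to "AC-equivalence of $Bl_C(Y)$", which is why the argument is routed through the invariants $(g_C, [C], c_1(N))$ that determine a curve blow-up up to AC-equivalence (as recalled just before Example \ref{cptc}) rather than through an isomorphism of $Y$ itself. Once that is granted, the proof is the following short paragraph.

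\begin{proof}
By Theorem \ref{blan}, $\phi$ is $\mathbb{C}^{*}$-equivariant for some $\mathbb{C}^{*}$-action on $Y$ with isolated fixed points, and since $\phi$ is divisorial the action on $Y$ is effective; moreover $b_{2}(Y)=1$. As $Y$ has a $\mathbb{C}^{*}$-action with isolated fixed points it is rational, so $K_{Y}$ is not nef and $Y$ is a smooth Fano $3$-fold of Picard rank $1$. By \cite[Theorem 1.1]{To2} and \cite[Theorem 1.1.2]{KPS} such $Y$ lies in one of the four deformation families $\mathbb{CP}^{3}$, $Q$, $V_{5}$, $V_{22}$.

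If $Y$ is a $\mathbb{CP}^{3}$, then by Lemma \ref{projp} applied to the invariant blow-up curve $C \subset \mathbb{CP}^{3}$, $X = Bl_{C}(\mathbb{CP}^{3})$ is AC-equivalent to $Bl_{C_{n}}(\mathbb{CP}^{3})$ for some $n>0$, i.e. to one of the $3$-folds of Example \ref{cptc}. If $Y$ is a quadric $3$-fold $Q$, then by Lemma \ref{quadp}, $X = Bl_{C}(Q)$ is AC-equivalent to $Bl_{C'_{n}}(Q)$ for some $n>0$, i.e. to one of the $3$-folds of Example \ref{quadc}. Finally, if $Y$ lies in the family $V_{5}$ or $V_{22}$, then by Lemma \ref{ex} there are only finitely many possibilities for $X$ up to AC-equivalence; the proof of Lemma \ref{ex} bounds these by at most $12$ for each of the two families, hence at most $24$ in total. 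This exhausts all cases and proves the proposition.
\end{proof}
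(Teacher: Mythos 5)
Your proposal is correct and follows essentially the same route as the paper: invoke Theorem \ref{blan} to put a $\mathbb{C}^{*}$-action with isolated fixed points on the smooth Fano $3$-fold $Y$ with $b_{2}(Y)=1$, identify $Y$ via \cite[Theorem 1.1.2]{KPS} as lying in one of the families $\mathbb{CP}^{3}$, $Q$, $V_{5}$, $V_{22}$, and then conclude by Lemmas \ref{projp}, \ref{quadp} and \ref{ex}. The only differences are cosmetic (you argue rationality of $Y$ directly from its action rather than from rationality of $X$, and you spell out the count of exceptional cases), so no gaps to report.
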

\begin{proof}
By Theorem \ref{blan} $Y$ inherits a $\mathbb{C}^{*}$-action with isolated fixed points, and by \cite{BB} $X$ is rational, hence $Y$ is rational with $b_{2}(Y)=1$, implying it is a smooth Fano $3$-fold. By  \cite[Theorem 1.1.2]{KPS} $Y$ is contained in one of the families $\mathbb{CP}^{3},Q,V_{5}$ or $V_{22}$. Hence, combining Lemma \ref{projp}, Lemma \ref{quadp} and Lemma \ref{ex} gives the result
\end{proof}

\subsection{Analysis of blow-downs to singular points}

Next, we deal with the final remaining two extremal contraction, a contraction of a $\mathbb{CP}^{2}$ with normal bundle $\mathcal{O}(-2)$, to a cyclic quotient singularity of type $\frac{1}{2}(1,1,1)$ i.e. $\mathbb{C}^{3}/\pm 1$ and the blow down of a quadric cone $P$ to a double cDV point. We will need the facts that the ODP singularity has Gorenstein index $2$ and is terminal.  We will also need that the double cDV point is Gorenstein and terminal.

\begin{lemma} \label{borfano}
Consider the class of smooth $3$-folds with a $\mathbb{C}^{*}$-action with $6$ fixed points having an extremal contraction of type 3 or 4. Then, there are a finite number of such $3$-folds up to AC-equivalence.
\end{lemma}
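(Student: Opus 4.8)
The plan is to push the problem onto the target $Y$ of the contraction, show that $Y$ ranges over a bounded set, and then observe that $X$ is reconstructed from $Y$ by a canonical operation, so that its AC-type depends only on finitely much discrete data.

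First I would set up the target. Let $F\colon X\to Y$ be an extremal contraction of type $3$ or type $4$. By Theorem \ref{blan}, $F$ is $\mathbb{C}^{*}$-equivariant for an effective $\mathbb{C}^{*}$-action on $Y$ with finitely many fixed points, and since $F$ is divisorial we have $\rho(Y)=\rho(X)-1=1$ by Lemma \ref{betti}. Moreover $Y$ is $\mathbb{Q}$-factorial (divisorial contractions in the MMP preserve $\mathbb{Q}$-factoriality) with exactly one singular point, of type $\frac{1}{2}(1,1,1)$ in case $3$ (so $Y$ has Gorenstein index $2$) or a double cDV point in case $4$ (a Gorenstein $cA$-point); both are terminal. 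Since $X$ is rational by Lemma \ref{betti}, so is $Y$; hence $Y$ is uniruled, $K_{Y}$ is not pseudo-effective, and as $\rho(Y)=1$ this forces $-K_{Y}$ to be ample. Thus $Y$ is a $\mathbb{Q}$-factorial terminal $\mathbb{Q}$-Fano $3$-fold of Picard rank $1$ with a single, prescribed, singular point. I would also record the symmetry: the exceptional divisor $E$ is $\mathbb{CP}^{2}$ (case $3$) or a quadric cone $P\cong\mathbb{P}(1,1,2)$ (case $4$), so $\chi_{top}(E)=3$ in both cases; since $E$ is $\mathbb{C}^{*}$-invariant and collapsed to one point $y_{0}\in Y$, Theorem \ref{topeu} shows the action on $E$ has $3$ fixed points, all over $y_{0}$, whence the action on $Y$ has $6-3+1=4$ fixed points, the minimum permitted by Theorem \ref{numberfp}.

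The main step is then to apply the classification of terminal $\mathbb{Q}$-Fano $3$-folds of Picard rank $1$ carrying a $\mathbb{C}^{*}$-action with the minimal number of fixed points and the prescribed isolated singularity, namely \cite[Theorem 1.1]{BHHN}, to deduce that there are only finitely many possibilities for $Y$; in particular only finitely many values of the anticanonical degree $(-K_{Y})^{3}$ and of the Fano index $r$ with respect to the ample generator of $\mathrm{Cl}(Y)$. (More robustly, one can simply invoke boundedness of terminal $\mathbb{Q}$-Fano $3$-folds of bounded Gorenstein index, which likewise yields finitely many values of these discrete invariants.) This is the step I expect to be the main obstacle, since it is where the substantive input — classification or boundedness — has to be brought in; everything else is formal.

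Finally I would reconstruct $X$ from $Y$. For a $\frac{1}{2}(1,1,1)$ point the extraction is the unique weighted blow-up, with $(E,N)=(\mathbb{CP}^{2},\mathcal{O}(-2))$ and discrepancy $\tfrac12$; for a double cDV point it is the blow-up of the maximal ideal, with $(E,N)=(P,\mathcal{O}(-1)|_{P})$ and discrepancy $1$. In either case $X$ is determined up to isomorphism by the pair $(Y,y_{0})$, and since $\mathrm{Pic}(X)=\mathbb{Z}^{2}$ by Lemma \ref{betti}, its integral cubic form and first Chern class $c_{1}(X)=-K_{X}=F^{*}(-K_{Y})-aE$ are pinned down by $(-K_{Y})^{3}$, by $r$, and by which of the two types occurs: writing $f=F^{*}h$ with $h$ the ample generator of $\mathrm{Cl}(Y)$ and $e=[E]$, the mixed numbers $f^{2}e$ and $fe^{2}$ vanish by the projection formula because $E$ maps to a point, while $e^{3}$ equals $4$ (case $3$) or $2$ (case $4$). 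As the governing data $\big((-K_{Y})^{3},\,r,\,\text{type}\big)$ range over a finite set, there are only finitely many AC-equivalence classes of such $X$, which is the assertion of the lemma.
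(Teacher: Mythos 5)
Your overall strategy is the same as the paper's: pass to the target $Y$, note it is rational with $b_{2}(Y)=1$ and hence Fano, observe that its unique singular point is terminal of Gorenstein index $1$ or $2$, invoke boundedness for such Fano $3$-folds, and recover $X$ by blowing up the singular point. However, your primary citation is misapplied: \cite[Theorem 1.1]{BHHN} classifies terminal Fano $3$-folds with a $2$-torus action, whereas in Lemma \ref{borfano} only a $\mathbb{C}^{*}$-action is available (Theorem \ref{blan} gives no more than that on $Y$), so that classification cannot be the main input here; the paper invokes \cite{BHHN} only in the proof of Theorem \ref{twotorus}, where a rank-$2$ torus genuinely acts. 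What actually carries the argument is your parenthetical fallback: boundedness of terminal Fano $3$-folds of bounded Gorenstein index, i.e.\ \cite[Theorem 1.1]{B}, which is exactly the reference the paper uses.

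The concluding step also claims more than the listed data give. Knowing $\bigl((-K_{Y})^{3},\,r,\,\text{type}\bigr)$, hence the cubic form and $c_{1}(X)$ in the classes $\{f,e\}$, does not by itself pin down an AC-equivalence class: AC-equivalence is a statement about the diffeomorphism type together with $c_{1}$, and by the Jupp--\v{Z}ubr--Wall classification of simply connected $6$-manifolds one must also control $p_{1}(X)$, equivalently $c_{2}(X)$ as a linear form on $H^{2}$; this requires in addition a bound on $c_{2}(Y)\cdot(-K_{Y})$, which \cite{B} provides --- this finer route is spelled out in the paper's remark following the lemma. Alternatively, argue as the paper's proof implicitly does: boundedness yields finitely many flat families containing the pairs $(Y,y_{0})$, blowing up the singular point in families yields finitely many deformation families for $X$, and deformation-equivalent smooth $3$-folds are AC-equivalent. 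A smaller technical wrinkle in your computation: in the type $3$ case the Weil generator $h$ of $\mathrm{Cl}(Y)$ need not be Cartier, so $F^{*}h$ is a priori only a $\mathbb{Q}$-class and $\{f,e\}$ need not be an integral basis of $H^{2}(X,\mathbb{Z})$; the cubic-form bookkeeping should be carried out with an honest integral basis.
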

\begin{proof}
Let $X$ be such a threefold and $\phi: X \rightarrow Y $ the extremal contraction. Since by the Bialynicki-Birula decomposition \cite[Theorem 4.3]{BB} $X$ is rational, so $Y$ is rational. Since $b_{2}(Y) = 1$, $Y$ is Fano by the Kleiman ampleness criterion. Since in both cases the singularities are terminal and Gorenstein index $1$ or $2$, $Y$ fits into a bounded family of possible cases by  \cite[Theorem 1.1]{B}. Hence, blowing up the singular point, there is finitely many possibilities for $X$ up to AC-equivalence. \end{proof}

\begin{remark}
We note that there is a more down to earth, but longer proof of Lemma \ref{borfano}. One may use the bounds on $(-K_{Y})^{3}$ and $c_{2}(Y).-K_{Y}$ proved in \cite{B} directly to prove that there are finitely many possibilities for the cohomology ring and characteristic classes of $X$. Then, the boundedness of the AC-equivalence classes follows from the Jupp-Jubr-Wall classification theorem for simply connected $6$-manifolds.
\end{remark}

%% https://arxiv.org/pdf/1605.04372.pdf the fact that b_2(Y)=1 is Lemma 1.1 here

%%Then if $E$ is the blow-up of the singularity $$K_{X} = \phi^{*}(K_{Y}) + \frac{1}{2}E.$$
%%Since $\phi^{*}(2K_{X})$, $E$ are cartier divisors, and $\phi^{*}(2K_{X}).E = 0$ their homology classes given an integral basis of $E$ up to taking multiples.   $0 \leq (\phi^{*}(2K_{X}))^3= (2K_{X})^{3} \leq 2K_{1}$ so  the possibilities for the cohomology ring is finite. Also, using the above equation the possibilities for $c_{1}$, is finite. 

%%Hence, by Proposition \cite{B} there is $K_{i}$ such that $-K_{Y} \leq K_{1}$, $-K_{X}. c_{2}(X) \leq K_{2}$ and since $(-2K_{X})^{3}$ is an integer index of $X$ is bounded. Then, we may apply Wall-Jupp-Zubr to $X$, obtaining finitely many possibilities.

\subsection{Concluding the proof of Theorem \ref{newmain} and Theorem \ref{twotorus}}
Here, we conclude the proof of Theorem \ref{newmain} by putting together the analysis of the different extremal contractions.

\begin{proof}[Proof of Theorem \ref{newmain}]

Suppose $X$ is a smooth  $3$-fold with a $\mathbb{C}^{*}$-action with $6$ fixed points. Since by Lemma \ref{betti}, $X$ is rational, $K_{X}$ is not nef, hence $X$ has a Mori extremal contraction. We can assume that the Mori extremal contraction $\phi: X \rightarrow Y$ is birational, since otherwise it is a Mori fibre space as required. We have to deal with the extremal contractions of type 1-5 from Theorem \ref{main}. By Lemma \ref{conres}, type $5$ does not occur. For extremal contraction of the types 1.-4.. the result follows from Lemma \ref{pointb}, Proposition \ref{curveb} and Lemma \ref{borfano} respectively.
\end{proof}

We also prove Theorem \ref{twotorus}:

\begin{proof}[Proof of the Theorem \ref{twotorus}] 

The proof is essentially is similar to the proof of Theorem \ref{newmain}, with a some additional remarks. We again analyze the possible divisorial extremal contractions, but at each stage we exclude $3$-folds not having a $\mathbb{C}^{*} \times \mathbb{C}^{*}$-action. The action of a generic subtorus has isolated fixed points so we may apply Lemma \ref{conres}, excluding extremal contractions of type 5 from Theorem \ref{main}.  We deal with the remaining types of extremal contractions.

\textit{Point Blow-ups and curve blow ups}  Let $\phi : X \rightarrow Y$ be a contraction to a smooth point or curve, similarly to the proof of Theorem \ref{newmain}, $Y$ is a smooth Fano $3$-fold. By Theorem \ref{blan}, $Y$ admits an effective $\mathbb{C}^{*} \times \mathbb{C}^{*}$-action. By  \cite[Theorem 1.1.2]{KPS}, $Y$ is isomorphic to either $\mathbb{CP}^{3}$ or  $Q$, a smooth quadric $3$-fold. For point blow-ups we note that $Bl_{p}(\mathbb{CP}^{3})$ is toric and otherwise we have $Bl_{p}(Q)$ as required. 

For smooth curve blow-ups, as was noted in Remark \ref{twoone} and Remark \ref{twotwo}, the $3$-folds appearing Example \ref{cptc} an Example \ref{quadc} with $n \geq 2$ do not have $\mathbb{C}^{*} \times \mathbb{C}^{*}$-actions, hence may be excluded. For $n=1 $, $Bl_{C_{1}}(\mathbb{CP}^{3})$ and $Bl_{C'_{1}}(Q)$ are smooth Fano $3$-folds with $b_{2}=2$ hence Mori fibre spaces. Hence, there are no exceptional cases coming from smooth curve blow-ups.

\textit{Extremal contractions of type 3 and 4.}  Suppose  $\phi: X \rightarrow Y$ is an extremal contraction of type 3 or 4. We note that $Y$ is Fano by the Kleimann ampleness criteria. Also by Theorem \ref{blan}, the $\mathbb{C}^{*} \times \mathbb{C}^{*}$-action descends to an effective $\mathbb{C}^{*} \times \mathbb{C}^{*}$-action on $Y$. $Y$ is $\mathbb{Q}$-factorial since it is the image of a divisorial extremal contraction  with relative Picard number $1$ from a smooth $3$-fold \cite[Section 2.1]{CT}. Furthermore, the singularities appearing in both type 3 and 4 extremal contractions are terminal, and $Y$  has Picard rank $1$, $Y$ fits into the big table of \cite[Theorem 1.1]{BHHN}. Since the Quadric $3$-fold is the only entry in the table which is Gorenstien (i.e. Gorenstein index 1) $\phi$ cannot be an extremal contraction of type 3, hence we suppose it is of type 4.

 Note that the equation $K_{X} - \phi^{*}(K_{Y}) = \frac{1}{2}E$ implies $K_{Y}^{3}$ cannot be an integer. Combining this with the fact Gorenstein index  of $Y$ is $2$, there is only one possibility for $Y$ up to isomorphism, namely $3$-fold 11. from the table in \cite[Theorem 1.1]{BHHN}, as required. \end{proof}

\section{Mori Fibre spaces}\label{mfa}

In this section we conclude by discussing the remaining case of smooth $3$-dimensional Mori fibre spaces. Algebraic torus actions on such spaces are studied comprehensively in other places,  we just note a few well-known facts about the topology of such spaces. Firstly we recall  a very basic topological restriction of $3$-dimensional Mori fibre spaces.

\begin{lemma} \label{cubez}
Let $X$ be a smooth, $3$-dimensional Mori fibre space with $b_{2}(X)>1$. Then there exists a non-zero $\alpha \in H^{2}(M,\mathbb{Z}) $ such that $\int_{X} \alpha^{3} = 0$. 
\end{lemma}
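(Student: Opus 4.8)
The plan is to use the fibre structure itself. Write $\phi : X \to Y$ for the Mori fibre space contraction, so that $Y$ is a smooth projective variety with $\dim Y < 3$ (Theorem \ref{main}). The candidate will be $\alpha = \phi^{*}\beta$ for a suitable $\beta \in H^{2}(Y,\mathbb{Z})$: since $\dim Y \le 2$, the cube of any degree-$2$ class on $Y$ lies in $H^{6}(Y,\mathbb{Z}) = 0$, and pullback is a ring homomorphism, so $\alpha^{3} = \phi^{*}(\beta^{3}) = 0$ for purely formal reasons. The only content is to choose $\beta$ non-torsion and to check that its pullback remains non-zero.

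First I would rule out $\dim Y = 0$. In that case $X$ would be a Fano variety of Picard rank $1$; since Fano varieties have $h^{i,0} = 0$ for $i > 0$, the exponential sequence gives $\mathrm{Pic}(X) \otimes \mathbb{Q} \cong H^{2}(X,\mathbb{Q})$, so $b_{2}(X) = \rho(X) = 1$, contradicting the hypothesis $b_{2}(X) > 1$. Hence $\dim Y \in \{1,2\}$; in particular $Y$ is a positive-dimensional projective variety and carries an ample line bundle $A$, and I take $\beta = c_{1}(A)$, a non-torsion element of $H^{2}(Y,\mathbb{Z})$.

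Next I would verify that $\alpha := \phi^{*}\beta \neq 0$. This follows from injectivity of $\phi^{*} : H^{2}(Y,\mathbb{Q}) \to H^{2}(X,\mathbb{Q})$ for a surjective morphism of smooth projective varieties: intersecting $X$ with $\dim X - \dim Y$ general hyperplane sections produces $Z \subseteq X$ with $\phi|_{Z} : Z \to Y$ surjective and generically finite, and $(\phi|_{Z})^{*} = \iota_{Z}^{*} \circ \phi^{*}$ is injective by the projection formula. Concretely, when $\dim Y = 1$ a multisection $\gamma$ of $\phi$ satisfies $\alpha \cdot \gamma = \deg(\phi|_{\gamma})\,(\beta \cdot [\mathrm{pt}]) \neq 0$, and when $\dim Y = 2$ a general surface section $S \subset X$ satisfies $\alpha^{2} \cdot [S] = \deg(\phi|_{S})\,\beta^{2} \neq 0$. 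Since $\beta$ is non-torsion, $\alpha$ is non-torsion, so in particular $\alpha \neq 0$.

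Finally, $\int_{X} \alpha^{3} = \int_{X} \phi^{*}(\beta^{3})$, and $\beta^{3} \in H^{6}(Y,\mathbb{Z}) = 0$ because $Y$ has real dimension at most $4$. Hence $\int_{X}\alpha^{3} = 0$, as required. The only points needing a little care are the exclusion of $\dim Y = 0$ and the non-vanishing of $\phi^{*}\beta$; the rest is formal.
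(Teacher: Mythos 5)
Your proposal is correct and follows essentially the same route as the paper: pull back an ample (Kähler) class from the base of the Mori fibration, note it is non-zero, and observe its cube vanishes because the base has dimension at most $2$. Your exclusion of $\dim Y=0$ (via $\rho(X)=1$ for a Fano contraction over a point) and your injectivity argument for $\phi^{*}$ are just more detailed versions of the paper's brief remarks.
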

\begin{proof}
Let $\phi: X \rightarrow Y$ be a Mori fibre space. Since $b_{2}(X)>1$, some curves are not contracted by $\phi$, so $\dim(Y)>0$. Also, since $Y$ is projective, we may take $\alpha = \phi^{*}([\eta])$, where $\eta$ is a K\"{a}hler form on $Y$. Note that $\alpha \neq 0$ since we may choose a divisor in $X$, which has to project to either a curve or divisor in $Y$. 
\end{proof}

\begin{lemma} \label{square}
Suppose that $X$ is a smooth $3$-fold with is a Mori extremal contraction $\phi : X \rightarrow Y$ where $\dim(Y)=1$. Then $\Delta(X) = 0$
\end{lemma}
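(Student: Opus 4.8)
The plan is to use the fibration $\phi: X \to Y$ with $\dim Y = 1$ to produce a class $\alpha \in H^2(X,\mathbb{Z})$ with enough vanishing intersection numbers to force $\Delta(X) = 0$. First I would note that $Y$ is a smooth projective curve; since $X$ is rational (it carries a $\mathbb{C}^*$-action with isolated fixed points, so Lemma \ref{betti} applies — or more directly, a Mori fibre space over a positive-dimensional base is rationally connected, hence $Y \cong \mathbb{CP}^1$). Set $\alpha = \phi^*([\mathrm{pt}])$, the pullback of the point class on $Y$; concretely $\alpha$ is the class of a fibre. Then $\alpha$ is a nonzero primitive class in $H^2(X,\mathbb{Z})$ (as in Lemma \ref{cubez}, a divisor in $X$ projects onto $Y$, so $\alpha \neq 0$), and it generates a rank-one primitive sublattice. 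Extend $\alpha$ to an integral basis $\{\alpha, \beta\}$ of $H^2(X,\mathbb{Z}) \cong \mathbb{Z}^2$ (possible since $\alpha$ is primitive).

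The key computation is that $\alpha^2 = 0$ in $H^4(X,\mathbb{Z})$: indeed $\alpha^2 = \phi^*([\mathrm{pt}])^2 = \phi^*([\mathrm{pt}]^2)$, and $[\mathrm{pt}]^2 = 0$ in $H^4(Y) = 0$ since $\dim Y = 1$. Consequently, in the notation of Definition \ref{delta} with $a = \alpha$, $b = \beta$, we get $a_0 = \int_X \alpha^3 = 0$ and $a_1 = \int_X \alpha^2 \beta = 0$. Substituting into
$$\Delta(X) = (a_0 a_3 - a_1 a_2)^2 - 4(a_0 a_2 - a_1^2)(a_1 a_3 - a_2^2)$$
gives $\Delta(X) = (0 - 0)^2 - 4(0 - 0)(0 - a_2^2) = 0$, as desired. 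Since $\Delta$ depends only on the cubic intersection form (it is basis-independent by \cite[Section 3.1]{OV}), the particular choice of $\beta$ is irrelevant.

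I expect no serious obstacle here; the only points requiring a line of justification are (i) $H^4(Y,\mathbb{Z}) = 0$, which is immediate for a curve, and (ii) the primitivity of $\alpha$, needed only to guarantee that $\{\alpha,\beta\}$ extends to an integral basis — and even this can be sidestepped, since $\Delta$ is determined by the cubic form over $\mathbb{Q}$ and we may compute with any basis containing a positive rational multiple of $\alpha$. If one prefers to avoid identifying $Y$ with $\mathbb{CP}^1$, it suffices to observe that $b_2(X) > 1$ forces $\dim Y = 1$ (as in Lemma \ref{cubez}), and the argument above uses only $\dim Y = 1$, not the genus of $Y$.
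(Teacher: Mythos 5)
Your argument is correct and takes essentially the same route as the paper: both hinge on the fibre class $\alpha=\phi^{*}[\mathrm{pt}]$ satisfying $\alpha^{2}=0$ in $H^{4}(X,\mathbb{Z})$. The only difference is that the paper then invokes \cite[Proposition 5]{OV} (existence of a nonzero square-zero class is equivalent to $\Delta(X)=0$), whereas you verify the needed direction by substituting $a_{0}=a_{1}=0$ directly into Definition \ref{delta}, which is a perfectly adequate replacement.
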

\begin{proof}
By \cite{M} $Y$ is a smooth projective curve. The fibre class $F \in H^{2}(X,\mathbb{Z})$ satisfies $F^{2}=0$. By \cite[Proposition 5]{OV}, the existence of such an element is equivalent to the vanishing of $\Delta(X)$.
\end{proof}

In the following Lemma, which is similar to \cite[Lemma 2.4]{CPS}, we classify nodal curve which are invariant by a $\mathbb{C}^{*}$-action on $\mathbb{CP}^{2}$. This will be used below to analyze discriminant curves of conic bundles.

\begin{lemma} \label{plane}
Suppose that $C \subset \mathbb{CP}^{2}$ is a (possibly reducible) curve in $\mathbb{CP}^{2}$ with at worst nodal singularities and $C$ is invariant by a non-trivial $\mathbb{C}^{*}$-action on $\mathbb{CP}^{2}$. Then $\deg(C) \leq 3$, in particular either \begin{enumerate}
\item $C$ is a union of up to $3$ lines in general position.
\item $C$ is an irreducible quadric.
\item $C$ is a union of a line and an irreducible quadric.
\end{enumerate}
\end{lemma}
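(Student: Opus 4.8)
The plan is to classify nodal $\mathbb{C}^{*}$-invariant curves in $\mathbb{CP}^{2}$ by first analyzing the finitely many $\mathbb{C}^{*}$-orbits and then understanding which configurations of orbit closures have only nodal singularities. A non-trivial $\mathbb{C}^{*}$-action on $\mathbb{CP}^{2}$ can, after a change of coordinates, be diagonalized as $z.[x_{0}:x_{1}:x_{2}] = [x_{0}: z^{a}x_{1} : z^{b}x_{2}]$ with $a \leq b$ not both zero. The fixed points are among the three coordinate points $P_{0},P_{1},P_{2}$ (possibly a whole coordinate line is fixed if $a=0$ or $a=b$), and every non-trivial orbit closure is either one of the three coordinate lines $\{x_{i}=0\}$ or the closure of a "diagonal" orbit through a point with all coordinates nonzero, which is a monomial curve. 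An invariant curve $C$ is a union of such orbit closures together with possibly the fixed locus, so $C$ is a union of coordinate lines and monomial curves.

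First I would reduce to the irreducible components: each irreducible component of $C$ is itself invariant, hence is either a coordinate line or the closure of a one-dimensional diagonal orbit. A coordinate line has degree $1$. For a diagonal orbit closure, I would show that requiring $C$ (hence that component) to be nodal forces its degree to be at most $2$: the closure of the orbit of $[1:1:1]$ under the weights $(0,a,b)$ (after normalizing $a_{0}=0$) is the image of $t \mapsto [1: t^{a}: t^{b}]$, and up to reordering coordinates and reparametrizing this is projectively a monomial curve of degree $\max(a,b,b-a)$ in suitable coordinates; such a curve is smooth only in degrees $1$ and $2$ (a smooth conic), and in degree $\geq 3$ any monomial/toric plane curve has a singularity worse than a node (e.g. a cusp $y^{2}=x^{3}$ or higher tangency at a coordinate point), which is excluded. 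So every irreducible component of $C$ has degree $1$ or $2$, and any degree-$2$ component is a smooth conic.

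Next I would bound the number of components and check the intersection pattern. If $C$ contains a conic $D$, then $D$ passes through exactly two of the coordinate fixed points (a smooth invariant conic meets the invariant set in two points). Any additional invariant component is a coordinate line; a coordinate line meets $D$ in at most two points, and for the union to be nodal these intersections must be transverse and the line cannot be tangent to $D$ — one checks directly that among the three coordinate lines, exactly one is secant to $D$ in two distinct points (giving two nodes), one is tangent (excluded), and one is disjoint or meets in one point, and that one cannot add two secant lines without creating a triple point or a tangency. This yields case 2 ($C=D$) and case 3 ($C = D \cup \ell$ with $\ell$ a secant line, degree $3$). If $C$ has no conic component, then $C$ is a union of coordinate lines; there are only three of them, meeting pairwise in the three distinct points $P_{0},P_{1},P_{2}$, so their union is at worst nodal and has degree at most $3$, giving case 1. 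In all cases $\deg(C) \leq 3$.

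The main obstacle is the claim that an invariant irreducible curve of degree $\geq 3$ cannot be nodal — that is, the explicit singularity analysis of monomial plane curves $[1:t^{a}:t^{b}]$ at the coordinate points and at infinity. The cleanest route is: such a curve, near a coordinate fixed point, has a local branch of the form $y = x^{m}$ or $y^{p} = x^{q}$ (with $p,q$ coming from the weight differences), and it has a node there only if the relevant exponents are all $\leq 2$ in the appropriate sense; combined with the total degree this pins down $a,b \in \{0,1,2\}$ and hence degree $\leq 2$. I would carry this out by passing to affine charts centered at each $P_{i}$, writing the curve as a monomial arc, and reading off the Puiseux exponent; a node requires the branch to be smooth or an ordinary double point, which excludes exponent $\geq 3$ behavior. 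Alternatively, one can invoke that a rational curve of degree $d$ in $\mathbb{CP}^{2}$ with only $\delta$ nodes satisfies $\delta = \binom{d-1}{2}$, so a nodal rational cubic has exactly one node, a nodal rational quartic has three, etc., and then rule out that a monomial (hence highly degenerate, with all its singularities concentrated at coordinate points) curve of degree $\geq 3$ achieves this generic node count — any toric plane curve of degree $\geq 3$ has a non-nodal singularity, so this cannot happen.
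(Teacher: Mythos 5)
Your overall strategy is the same as the paper's: diagonalize the action, observe that every irreducible invariant curve other than an invariant line is (up to automorphism) the monomial curve $[X^{b}:X^{b-a}Y^{a}:Y^{b}]$, check that nodality forces each component to be a line or a smooth conic, and then assemble. However, there is a genuine gap in your assembly step. After establishing that each component has degree $1$ or $2$, you assert that once $C$ contains an invariant conic $D$, ``any additional invariant component is a coordinate line.'' That does not follow from your own decomposition: when the weights are $(0,a,2a)$ there is a whole pencil of irreducible invariant conics ($x_{1}^{2}=\lambda x_{0}x_{2}$ in suitable coordinates), so a priori $C$ could be a union of two invariant conics, which has degree $4$ and would violate the statement. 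This case must be excluded, and the exclusion is exactly the observation the paper makes: every irreducible invariant conic passes through the two fixed points $p_{\min}$ and $p_{\max}$, and any two of them meet only at those two points, hence (by B\'ezout) with multiplicity $2$ at each, i.e.\ tangentially; the resulting tacnode is not a node, so at most one conic component can occur. Your framework contains all the ingredients for this (you already note that an invariant conic passes through exactly two fixed points), but the step is missing as written.

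Two smaller inaccuracies, neither fatal: first, in the degenerate cases $a=0$ or $a=b$ the invariant lines are not only the three coordinate lines --- there is a pencil of invariant lines through the isolated fixed point --- so ``$C$ is a union of coordinate lines, there are only three of them'' is not literally correct; one should instead note that three or more concurrent lines produce an ordinary triple point, which is not a node, so at most two lines of the pencil plus the fixed line can occur, again giving degree at most $3$. Second, for the conic $D$ the correct count is that exactly one coordinate line is a transverse secant and the other two are tangent (a line in $\mathbb{CP}^{2}$ is never disjoint from a conic); your conclusion that only one line can be added to $D$ is still right. Your local analysis of the monomial curve (unibranch singularity of multiplicity $a$ at one fixed point and $b-a$ at the other, so smoothness/nodality forces $(a,b)=(1,2)$) is sound and is at the same level of detail as the paper's ``one may check.''
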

\begin{proof}
Up to a change of co-ordinates we may write the $\mathbb{C}^{*}$-action on $\mathbb{CP}^{2}$ as $z.[x_{0}:x_{1}:x_{2}] = [x_{0}:x_{1}^a:x_{2}^b]$ where, without loss of generality $0 \leq a \leq b$ are integers. Consider an invariant curve $C$, then assuming $C$ is not a co-ordinate line, then up to an automorphism, $C$ is given parametrically by $[X^{b}: X^{b-a}Y^{a}:Y^{b}]$. Then, one may check that this curve is smooth if and only if it is a line or $a=1$, $b=2$ i.e. it is a smooth conic, and for $b>2$ the curve has non-nodal singularities. Furthermore, any invariant conic contains both $p_{\min} $ and $p_{\max}$ (i.e. the fixed points where the action has both weights positive resp. negative), with any pair of them meeting tangentially at these fixed points. The result follows. 
\end{proof}

\begin{lemma} \label{disc}
Suppose that $X$ is a smooth $3$-fold with a $\mathbb{C}^{*}$-action with $6$ fixed points. Suppose there is a Mori extremal contraction $\phi : X \rightarrow Y$ where $\dim(Y)=2$. Then $X$ is a conic bundle over $\mathbb{CP}^{2}$ with discriminant curve $\Gamma \subset \mathbb{CP}^{2}$ having degree at most $3$.
\end{lemma}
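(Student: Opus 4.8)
The plan is to first pin down the base $Y$ of the Mori fibre space and the fibre dimension, then control the discriminant curve using the $\mathbb{C}^{*}$-action it inherits. Since $\dim(Y) = 2$ and $\phi$ is a Mori extremal contraction from a smooth $3$-fold, by Mori's theorem \cite{M} the base $Y$ is a smooth projective surface and $\phi$ is a conic bundle (generic fibre $\mathbb{CP}^{1}$). By Theorem \ref{blan}, $Y$ carries a $\mathbb{C}^{*}$-action with finitely many fixed points, and since $X$ is rational (Lemma \ref{betti}) with $b_{2}(X) = 2$, one gets $b_{2}(Y) = 1$; a smooth rational surface with $b_{2} = 1$ is $\mathbb{CP}^{2}$. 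Hence $X$ is a conic bundle over $\mathbb{CP}^{2}$. (One should double-check the relative Picard number is $1$, which is automatic for an extremal contraction, so that no further blow-ups of $\mathbb{CP}^{2}$ are hidden in the base.)

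Next I would turn to the discriminant curve $\Gamma \subset \mathbb{CP}^{2}$, the locus over which the conic degenerates. The key point is that $\Gamma$ is canonically determined by the conic bundle structure, hence is invariant under the induced $\mathbb{C}^{*}$-action on $\mathbb{CP}^{2}$ from Theorem \ref{blan}. Moreover, for a conic bundle arising as a Mori extremal contraction from a \emph{smooth} $3$-fold, $\Gamma$ has at worst nodal singularities: over a node of $\Gamma$ the fibre is a pair of lines, over a smooth point of $\Gamma$ it is a double line, and smoothness of the total space forbids worse degenerations (this is standard; see e.g. the references around \cite{CPS}). Therefore $\Gamma$ is a $\mathbb{C}^{*}$-invariant nodal curve in $\mathbb{CP}^{2}$, and Lemma \ref{plane} applies directly to give $\deg(\Gamma) \leq 3$.

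The main obstacle I expect is the verification that $\Gamma$ is genuinely nodal (so that Lemma \ref{plane} is applicable) rather than merely a plane curve of a priori unbounded degree with arbitrary singularities — this requires the local analysis of degenerate fibres of a smooth conic bundle and an argument that the $\mathbb{C}^{*}$-invariance together with nodality of $\Gamma$ is preserved; equivalently one must know the total space cannot be smooth over a non-nodal point of a $\mathbb{C}^{*}$-invariant $\Gamma$. A secondary point to be careful about is ruling out the possibility that the base is a conic bundle over a curve in disguise, or that $Y$ could fail to be $\mathbb{CP}^{2}$ because of the effectivity subtlety in Theorem \ref{blan} (which only guarantees effectivity for divisorial, not fibre-type, contractions); but since we only need $b_{2}(Y) = 1$ and $Y$ rational smooth, the conclusion $Y \cong \mathbb{CP}^{2}$ is robust. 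Once nodality is in hand, the degree bound is immediate from Lemma \ref{plane}.
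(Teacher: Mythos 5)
Your proposal is correct and follows essentially the same route as the paper: descend the action to $Y$ via Theorem \ref{blan}, identify $Y\cong\mathbb{CP}^{2}$ from $b_{2}(Y)=1$, note the discriminant curve is invariant and nodal (the paper cites \cite[Corollary 3.3.3]{P} for nodality), and conclude with Lemma \ref{plane}. The points you flag as potential obstacles (nodality and effectivity of the induced action) are exactly the ones the paper disposes of by citation and by the finiteness of the fixed-point set, respectively.
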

\begin{proof}
By Theorem \ref{blan}, the action descends to an action on $Y$ with isolated fixed points. Hence, $Y$ is a torc surface \cite{Ka} with $b_{2}(Y)=1$, implying that $Y \cong \mathbb{CP}^{2}$. We note that since $\phi$ is equivariant, the discriminant curve is  invariant by the induced action on $\mathbb{CP}^{2}$. It is also nodal by \cite[Corollary 3.3.3]{P}. Hence, by Lemma \ref{plane}, the discriminant curve has degree at most $3$.
\end{proof}

We note that the discriminant curve of smooth rational conic bundles over $\mathbb{CP}^{2}$ has degree at most $5$ \cite[Theorem 9.1]{P}, hence the above lemma just excludes the cases of degree $4$ or $5$. We note also that if $X$ is spin i.e. $-K_{X}$ is numerically divisible by $2$ then the discriminant curve has to be empty (see \cite[Theorem 10]{ST}), since clearly a fibre cannot be a union of two lines in $\mathbb{CP}^{2}$. Then by the vanishing of Brauer group of $\mathbb{CP}^{2}$, the conic bundle is of the form $\mathbb{P}(E)$ for some rank $2$ bundle $E$ over $\mathbb{CP}^{2}$.

The following Fano $3$-fold shows that degree $3$ discriminant curves do occur:

\begin{example} \cite[Lemma 10.2]{CPS} \label{ann}
Consider the $3$-fold $X = \{x_0 y_0^2 + x_1 y_1^2 + x_3 y_2^2 =0 \}  \subset \mathbb{P}^2 \times \mathbb{P}^2$.   Consider the $\mathbb{C}^{*} \times \mathbb{C}^{*}$ action on $X$ given by $$(z_1,z_2)([x_0:x_1:x_2])([y_0:y_1:y_2]) =  ([z_1^2 x_0:  z_2^2 x_1:x_2])([z_1^{-1}y_0:  z_{2}^{-1} y_1:y_2]) .$$ Then this action has $6$ isolated fixed points, furthermore, the projection $\pi_1$ to the first factor gives $X$ the structure of a conic bundle, whose discriminant curve is the union of the three coordinate lines in $\mathbb{CP}^2$.

\end{example}
We note that although the above $3$-folds has a structure of a equivariant conic bundle over $\mathbb{CP}^2$ with non-empty discriminant curve, the projection $\pi_2$ is a $\mathbb{CP}^1$ bundle, giving $X$ the structure of a projective bundle $X= \mathbb{P}(E)$ where $c_{1}(E) = 2$, $c_{2}(E) = 4$. In particular $\Delta(\mathbb{P}(E)) = -12$. We are unaware if there is an example which is not diffeomorphic to a $\mathbb{CP}^1$-bundle over $\mathbb{CP}^2$.

\subsubsection{Proof of Corollary \ref{dis}}
Finally, we prove Corollary \ref{dis} combining results of the previous section.
\begin{proof}[Proof of Corollary \ref{dis}]
If $X$ is one of the $3$-folds in Example \ref{cptc} and Example \ref{quadc} then $\Delta(X) $ is bounded above, as follows from Corollary \ref{blowupsequence}. Hence, by Theorem \ref{newmain} it remains to prove the statement when $X$ is Mori fibre space $\phi: X \rightarrow Y$. By Lemma \ref{square}, if $\dim(Y)=1$ then $\Delta(X)=0$. In the remaining case that $\dim(Y)=2$ i.e. $X$ is a conic bundle, then Lemma \ref{disc} gives that the degree of the discriminant curve is at most $3$.
\end{proof}

We also give a purely topological statement, showing that cubic intersection forms of smooth $3$ folds with a $\mathbb{C}^{*}$-actions with $6$ fixed points lie in a very restricted subset of the possible integral cubic forms on two variables.
\begin{corollary} \label{distop}
There is a constant $K$ such that any smooth $3$-fold $X$ having a $\mathbb{C}^{*}$-action with $6$ fixed points, satisfies one of the following two conditions:
\begin{enumerate}
\item There exists non-zero $\alpha \in H^{2}(X,\mathbb{Z})$ such that $\int_{X} \alpha^{3}=0$
\item $\Delta(X) <K$. 

\end{enumerate}
\end{corollary}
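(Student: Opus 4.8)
The plan is to derive Corollary~\ref{distop} as an essentially formal consequence of Corollary~\ref{dis}, by showing that each of the two alternatives in Corollary~\ref{dis} implies the corresponding alternative here. First I would take the constant $K$ to be the same constant produced by Corollary~\ref{dis}. Then, given a smooth $3$-fold $X$ with a $\mathbb{C}^{*}$-action with $6$ fixed points, Corollary~\ref{dis} tells us that either $\Delta(X)<K$ — in which case alternative~2 holds verbatim and we are done — or $X$ is a conic bundle over $\mathbb{CP}^{2}$ with discriminant curve of degree at most $3$. So the only thing to check is that in this latter case, alternative~1 holds: there is a non-zero $\alpha\in H^{2}(X,\mathbb{Z})$ with $\int_{X}\alpha^{3}=0$.

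For that step I would argue exactly as in Lemma~\ref{cubez}: a conic bundle $\phi:X\to\mathbb{CP}^{2}$ is in particular a Mori fibre space with base of positive dimension, so pulling back the hyperplane class (or any K\"ahler class) $\eta$ on $\mathbb{CP}^{2}$ gives $\alpha=\phi^{*}(\eta)\in H^{2}(X,\mathbb{Z})$, which is non-zero since $X$ contains divisors that must dominate a curve or divisor downstairs, and which satisfies $\alpha^{3}=\phi^{*}(\eta^{3})=0$ because $\eta^{3}=0$ on the surface $\mathbb{CP}^{2}$. Indeed this is nothing but Lemma~\ref{cubez} applied to $X$, whose hypothesis $b_{2}(X)>1$ is guaranteed by Lemma~\ref{betti} (which gives $b_{2}(X)=2$). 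One small point worth spelling out: Lemma~\ref{betti} already ensures $X$ satisfies the standing hypotheses ($\pi_{1}=1$, $b_{3}=0$, $b_{2}=2$) under which $\Delta(X)$ is defined, so the statement "$\Delta(X)<K$" is meaningful.

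I do not expect any real obstacle here — the corollary is a repackaging of Corollary~\ref{dis} into a statement that does not reference conic bundles or discriminant curves. The only mild subtlety is making sure the case split is exhaustive and that the two cases of Corollary~\ref{dis} map cleanly onto the two cases here; since the $\Delta(X)<K$ case is common to both formulations, this is immediate, and the conic-bundle case feeds straight into the Lemma~\ref{cubez} argument. One could alternatively phrase the proof by invoking Lemma~\ref{cubez} directly for all Mori fibre spaces appearing in Theorem~\ref{newmain} and then citing Corollary~\ref{blowupsequence} (via Corollary after Lemma~\ref{blowupsequence}) for cases 2 and 3 and Lemma~\ref{borfano}/Lemma~\ref{pointb} for the finitely many remaining cases to bound $\Delta$, but routing everything through Corollary~\ref{dis} is shorter and avoids repeating work.
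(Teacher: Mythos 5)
Your proposal is correct and matches the paper's proof, which likewise deduces the statement by combining Corollary \ref{dis} with Lemma \ref{cubez} (the conic bundle case feeding into the pullback-of-a-K\"ahler-class argument). The extra details you spell out (taking the same constant $K$, and noting $b_{2}(X)=2$ via Lemma \ref{betti}) are exactly the implicit content of the paper's one-line proof.
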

\begin{proof}
This follows from Corollary \ref{dis} and Lemma \ref{cubez}.
\end{proof}

We remark that the condition of Case 1. is a rarely satisfied by smooth rational $3$-folds. For a simple example consider the blow-up of the quadric $3$-fold $Q$ in a point. Then in the natural integral basis of $H^2(Bl_{p}(Q),\mathbb{Z}))$, the cubic form is  $(xa+by)^{3}=2x^{3}+y^{3}$, hence $(xa+yb)^{3}=0$ has no non-zero integer solutions. The reader may play with standard constructions to convince themselves that this is the typical situation.

Universität zu Köln: Mathematisches Institut.
Weyertal 86-90
50931 Köln
Germany

Email: nlindsay@math.uni-koeln.de


\begin{thebibliography}{9}

\bibitem{B} A. Borisov. Boundedness Of Fano Threefolds With Log-Terminal Singularities Of Given Index. J. Math. Sci. Univ. Tokyo. (2001).

\bibitem{BB} A. Bialynicki-Birula. Some Theorems on Actions of Algebraic Groups. Annals of Mathematics, Second Series, Vol. 98, No. 3, pp. 480-497 (1973).

\bibitem{Bo} A. Borel.  Linear Algebraic Groups. Springer, Graduate Texts in Mathematics 126 .


\bibitem{BHHN}  B. Bechtold, J. Hausen J,  E. Huggenberger, M. Nicolussi. On Terminal Fano 3-Folds with 2-Torus Action. International Mathematics Research Notices (5) 10.1093/imrn/rnv190 (2016).


\bibitem{BV} N. Berline. M. Vergne. Classes caracteristiques equivariantes. Formule de localisation en cohomologie equivariante, C.R.Acad.Sci. Paris 295 (1982) 539--541.

\bibitem{Br} M. Brion. Some structure theorems for algebraic groups. Proceedings of Symposia in Pure Mathematics (2015).

\bibitem{Br2} M. Brion. Linearization of algebraic group actions.

\bibitem{CT} P. Cascini. L. Tasin. On the Chern numbers of a smooth threefold.  Transactions of the American Mathematical Society 370(11) ( 2014).


\bibitem{CS} J.B.Carrell, A.J. Sommese. Some topological aspects of $\mathbb{C}^{*}$-actions on compact K\"{a}hler manifolds. Comment. Math. Helvatici 54 pg 567-582. (1979).

\bibitem{CPS} I. Cheltsov, Victor Przyjalkowski, and Constantin Shramov. Fano Threefolds with Infinite Automorphism groups. Izvestiya: Mathematics, Volume 83, Number 4. (2019).

\bibitem{GKZ1}  O. Goertsches, P. Konstantis, L. Zoller. Symplectic and Kähler structures on biquotients. arXiv:1812.09689 (2019).

\bibitem{GKZ2} O. Goertsches, P. Konstantis, L. Zoller. Realization of GKM fibrations and new examples of Hamiltonian non-Kähler actions. arXiv:2003.11298 (2020).


\bibitem{Ka}  Y. Karshon. Periodic Hamiltonian flows on four dimensional manifolds. Memoirs Amer. Math. Soc. 672, 71p, (1999).

\bibitem{KPS} A. Kuznetsov. Y. Prokhorov. C. Shramov. Hilbert schemes of lines and conics and automorphism groups of Fano threefolds. Japanese Journal of Mathematics.13, pages, 109–185(2018).


\bibitem{LP2}  N. Lindsay, D. Panov. Symplectic and K\"{a}hler structures on $\mathbb{CP}^{1}$-bundles over $\mathbb{CP}^{2}$. To appear in Selecta Mathematica.

\bibitem{M} S.Mori. Threefolds whose canonical bundles are not numerically effective. Annals of math. 116. pg 133-176 (1982).



\bibitem{OV} C. Okonek, and  A. Van de Ven. Cubic forms and complex 3-folds. Enseign. Math. (2) 41, 3-4  297--333 (1995).

\bibitem{P} Y. Prokhorov. The rationality problem for conic bundles. Uspekhi Mat. Nauk,, Volume 73, Issue 3(441), Pages 3–88 (2018).

\bibitem{S} H. Sumihiro, Equivariant completion. II, J. Math. Kyoto Univ. 15 (1975), no. 3,573–605.


\bibitem{So} A. Sommese. Extension Theorems for Reductive group actions on Compact Kahler manifolds. Math. Ann. 218, 107--116 (1975).


\bibitem{ST} S. Schreieder, L. Tasin. K\"ahler structures on spin 6-manifolds. Math. Annalen,  Vol. 373, Issue 1--2, (2019) pp 397--419.


\bibitem{To1} S. Tolman. Examples of non-K\"ahler Hamiltonian torus action. Invent. math 131 (2) (1998) 299--310. 

\bibitem{To2} S. Tolman. A Symplectic Generalisation of Petries Conjecture. Transactions of the American Mathematical Society 362 (08): 3963--3996, 2010.




\end{thebibliography}
\end{document}